\newif\ifCmp\Cmptrue
\newtheorem{theorem}{Theorem}
\newtheorem{lemma}[theorem]{Lemma}
\newtheorem{corollary}[theorem]{Corollary}
\theoremstyle{definition}
\newtheorem{definition}[theorem]{Definition}
\newtheorem{example}[theorem]{Example}
\theoremstyle{remark}
\newtheorem{remark}[theorem]{Remark}
\DeclareMathOperator\sign{sign}
\DeclareMathOperator\rank{rank}
\DeclareMathOperator\diag{diag}
\newcommand\minst[2][]{\smash[b]{\min_{#1}} \ #2 \qstq}
\newcommand\minst[2][]{\min_{#1} \quad &#2 \\ \stq}
\newcommand\compl{\ensuremath\perp}
\newcommand\R{\mathbb R}
\newcommand\setA{\mathcal A}
\newcommand\setD{\mathcal D}
\newcommand\setF{\mathcal F}
\newcommand\setP{\mathcal P}
\newcommand\setS{\mathcal S}
\newcommand\setU{\mathcal U}
\newcommand\setV{\mathcal V}
\newcommand\setUV{\setU_+ \cup \setV_+}
\newcommand\setDt{\setD^t}
\newcommand\setUt{\setU^t}
\newcommand\setVt{\setV^t}
\newcommand\PUt[1]{P_{\setUt_#1}}
\newcommand\PVt[1]{P_{\setVt_#1}}
\newcommand\PU{P_{\setU_+}}
\newcommand\PV{P_{\setV_+}}
\newcommand\PUV{P_{\setUV}}
\newcommand\dx{d_x}
\newcommand\du{d_u}
\newcommand\dv{d_v}
\newcommand\muu{\mu_u}
\newcommand\muv{\mu_v}
\newcommand\Lagr{\mathcal L}
\newcommand\Lc{\Lagr_\compl}
\newcommand\setE{\mathcal E}
\newcommand\setI{\mathcal I}
\newcommand\setZ{\mathcal Z}
\newcommand\Fabs{\setF\tsb{abs}}
\newcommand\Feabs{\setF\tsb{e-abs}}
\newcommand\Fmpcc{\setF\tsb{mpcc}}
\newcommand\Fempcc{\setF\tsb{e-mpcc}}
\newcommand\Jmpcc{J\tsb{mpcc}}
\newcommand\Umpcc{U\tsb{mpcc}}
\newcommand\Hmpcc{H\tsb{mpcc}}
\newcommand\Jempcc{J\tsb{e-mpcc}}
\newcommand\Jabs{J\tsb{abs}}
\newcommand\Uabs{U\tsb{abs}}
\newcommand\Habs{H\tsb{abs}}
\newcommand\Ueabs{\_U\tsb{e-abs}}
\newcommand\Heabs{\_H\tsb{e-abs}}
\newcommand\Jalp{J_{\alpha}}
\newcommand\JE{J_\setE}
\newcommand\JA{J_\setA}
\newcommand\bJalp{\_J_\alpha}
\newcommand\bJE{\_J_\setE}
\newcommand\Jeabs{J\tsb{e-abs}}
\newcommand\alpt{\alpha^t}
\newcommand\alpw{\alpha^w}
\newcommand\sigt{\sigma^t}
\newcommand\sigw{\sigma^w}
\newcommand\zt{z^t}
\newcommand\zw{z^w}
\newcommand\ut{u^t}
\newcommand\uw{u^w}
\newcommand\vt{v^t}
\newcommand\vw{v^w}
\newcommand{\Dom}{D}
\newcommand\Domt{\Dom^t}
\newcommand\Domx{\Dom^x}
\newcommand{\Domzt}{\Dom^{\abs{\zt}}}
\newcommand\Domz{\Dom^{\abs{z}}}
\newcommand\Domtzt{\Dom^{t, \abs{\zt}}}
\newcommand\Domxz{\Dom^{x, \abs{z}}}
\newcommand{\Cspace}{C}
\newcommand{\Cd}{\Cspace^d}
\newcommand{\Cabs}{\Cd_{\text{abs}}}
\newcommand{\Cone}{\Cspace^1}
\newcommand\cA{c_\setA}
\newcommand\cE{c_\setE}
\newcommand\cI{c_\setI}
\newcommand\cZ{c_\setZ}
\newcommand\bcE{\_c_\setE}
\newcommand\bcZ{\_c_\setZ}
\newcommand\calp[1][]{\card{\alpha#1}}
\newcommand\calpt{\card{\alpt}}
\newcommand\calpw{\card{\alpw}}
\newcommand\csig[1][]{\card{\sigma#1}}
\newcommand\lamA{\lambda_\setA}
\newcommand\lamE{\lambda_\setE}
\newcommand\lamI{\lambda_\setI}
\newcommand\lamZ{\lambda_\setZ}
\newcommand\blamE{\_\lambda_\setE}
\newcommand\blamZ{\_\lambda_\setZ}
\newcommand\lamZw{\lamZ^w}
\newcommand\pd{\partial}
\newenvironment{defarray}[2][l]
{\left\{\,#2\,\left|\def\arraystretch{\defarraystretch}\begin{array}{#1}}
{\end{array}\right.\!\right\}}
\newcommand\abstracttext{%
  This work is part of an ongoing effort of comparing
  non-smooth optimization problems in abs-normal form to MPCCs.
  We study the general abs-normal NLP with equality and inequality constraints
  in relation to an equivalent MPCC reformulation.
  We show that kink qualifications and MPCC constraint qualifications of
  linear independence type and Mangasarian-Fromovitz type are equivalent.
  Then we consider strong stationarity concepts with first and second
  order optimality conditions, which again turn out to be equivalent
  for the two problem classes.
  Throughout we also consider specific slack reformulations suggested in
  \cite{Hegerhorst_Steinbach:2019}, which preserve constraint qualifications
  of linear independence type but \emph{not} of Mangasarian-Fromovitz type.
}
\newcommand\keywordlist{%
  Non-smooth NLPs,
  abs-normal form,
  MPCCs,
  linear independence and Mangasarian-Fromovitz type constraint qualifications,
  optimality conditions
}
\newcommand\amscodelist{%
  90C30, 
  90C33, 
  90C46
}
\newcommand\addrIfAM{Leibniz Universit\"at Hannover,
  Institut f\"ur Angewandte Mathematik,
  Wel\-fen\-gar\-ten~1, 30167 Hannover, Germany}
\newcommand\addrIMO{Technische Universit\"at Carolo-Wilhelmina zu Braunschweig,
  Institut f\"ur Mathe\-ma\-ti\-sche Optimierung, Universit\"ats\-platz~2,
  38106 Braunschweig, Germany}
\title{Relations Between Abs-Normal NLPs and MPCCs.
  Part 1: Strong Constraint Qualifications}
\date{\ISOToday}
\author{L.C. Hegerhorst-Schultchen%
  \thanks{\addrIfAM. \email{hegerhorst@ifam.uni-hannover.de}}
  \and
  C. Kirches\thanks{\addrIMO. \email{c.kirches@tu-bs.de}}
  \and
  M.C.~Steinbach\thanks{\addrIfAM. \email{mcs@ifam.uni-hannover.de}}
}
\begin{document}

\ifcase\FmtChoice
\author{
  \name{L.C.~Hegerhorst-Schultchen\textsuperscript{a}$^\ast$%
    \thanks{Corresponding author. E-Mail {\tt hegerhorst@ifam.uni-hannover.de}.}
    \and
    C. Kirches\textsuperscript{b}
    \and
    M.C.~Steinbach\textsuperscript{a}}
  \affil{\textsuperscript{a}\addrIfAM.}
  \affil{\textsuperscript{b}\addrIMO.}
}
\title{Relations Between Abs-Normal NLPs and MPCCs.\\
  Part 1: Strong Constraint Qualifications}
\date{\today}
\maketitle
\begin{abstract}
  \abstracttext
\end{abstract}
\begin{keywords}
  \keywordlist
\end{keywords}
\begin{amscode}
  \amscodelist
\end{amscode}
\or
\author{L.C.~Hegerhorst-Schultchen}
\address{L.C. Hegerhorst-Schultchen, \addrIfAM.}
\email{hegerhorst@ifam.uni-hannover.de}
\author{C. Kirches}
\address{C. Kirches, \addrIMO.}
\email{c.kirches@tu-bs.de}
\author{M.C.~Steinbach}
\address{M.C.~Steinbach, \addrIfAM.}
\email{mcs@ifam.uni-hannover.de}
\title[Relations Between Abs-Normal NLPs and MPCCs. Part 1: Strong CQs]
{Relations Between Abs-Normal NLPs and MPCCs\\
  Part 1: Strong Constraint Qualifications}
\begin{abstract}
  \abstracttext
\end{abstract}
\keywords{\keywordlist}
\subjclass{\amscodelist}
\maketitle
 \or
\maketitle
\begin{abstract}
  \abstracttext
\end{abstract}

\fi

\section{Introduction}
\label{sec:introduction}

Nonsmoothness arises frequently
in practical optimization problems from various areas.
In finite dimensions, certain nonsmooth functions
like $\ell_1$ and $\ell_\oo$ norms
can be avoided by smooth remodeling,
but models that involve (possibly nested) absolute value,
maximum, and minimum functions,
models with piecewise definitions and
models with equilibrium conditions or complementarity conditions
lead to genuine nonsmoothness.
An important sub-class of such nonsmooth finite-dimensional optimization problems
is essentially characterized as ``NLPs with finitely many kinks'',
which gives rise to more general standard problem classes like
MPCCs \cite{Luo_Pang_Ralph_1996}
and optimization problems in abs-normal form \cite{Griewank_2013}.

The latter are non-smooth nonlinear optimization problems of the form
 \begin{align*}
   \minst[x]{f(x)}
   g(x) &= 0, \tag{NLP} \label{eq:nlp} \\
   h(x) &\ge 0,
 \end{align*}
where $\Domx\subseteq \R^n$ is open, the objective $f\in\Cd(\Domx,\R)$ is a smooth function
($d \ge 1$)
and the equality and inequality constraints $g\in\Cabs(\Domx,\R^{m_1})$ and
$h\in\Cabs(\Domx,\R^{m_2})$ are non-smooth functions with the non-smoothness exposed in
abs-normal form \cite{Griewank_2013}.
Thus, there exist functions
$\cE\in\Cd(\Domxz,\R^{m_1})$,
$\cI\in\Cd(\Domxz,\R^{m_2})$ and
$\cZ\in\Cd(\Domxz,\R^s)$ with $\Domxz=\Domx\x\Domz$,
$\Domz\subseteq\R^s$ open and \emph{symmetric}
(i.e., $z \in \Domz$ implies $\Sigma z \in \Domz$
for every signature matrix $\Sigma$, see \cref{def:signature}) such that
\begin{align*}
g(x)&=\cE(x,\abs{z}),\\
h(x)&=\cI(x,\abs{z}),\tag{ANF}\label{eq:anf}\\
z&=\cZ(x,\abs{z}) \qtext{with $\pd_2 \cZ(x,\abs{z})$ strictly lower triangular}.
\end{align*}
Note that we introduce one joint \emph{switching constraint} $\cZ$
for $g$ and $h$ and reuse \emph{switching variables} $z_i$
if the same argument occurs inside an absolute value in $g$ and $h$.
Here, components of $z$ can be computed one by one from $x$ and $z_i$, $i<j$,
since $\pd_2 \cZ(x,\abs{z})$ is strictly lower triangular.
In the following we write $z(x)$ to denote this dependence explicitly.
However, $z$ is implicitly defined by $z=\cZ(x,\abs{z})$.
To consider solvability of this system, we use the reformulation $\abs{z_i} = \sign(z_i) z_i$.

\begin{definition}[Signature of $z$]
  \label{def:signature}
  Let $x \in \Domx$. We define the \emph{signature} $\sigma(x)$
  and the associated \emph{signature matrix} $\Sigma(x)$ as
  \begin{align*}
    \sigma(x) &\define \sign(z(x)) \in \set{-1,0,1}^s, &
    \Sigma(x) &\define \diag(\sigma(x)).
  \end{align*}
  A signature vector $\sigma(x) \in \set{-1,1}^s$ is called \emph{definite},
  otherwise \emph{indefinite}.
\end{definition}

With \cref{def:signature},
we can write $\abs{z(x)}=\Sigma(x) z(x)$ and consider the system $z=\cZ(x,\Sigma z)$, which for any \textbf{fixed signature} $\Sigma = \Sigma(\^x)$ becomes a differentiable system.
Then, application of the implicit function theorem yields the existence of a locally unique solution $z(x)$ with Jacobian
\begin{equation*}
 \pd_x z(\^x) = [I - \pd_2 \cZ(\^x, \abs{z(\^x)}) \Sigma]^{-1}
 \pd_1 \cZ(\^x, \abs{z(\^x)}) \in \R^{s \x n}.
\end{equation*}

\begin{definition}[Active Switching Set]
  We call the switching variable $z_i$ \emph{active} if $z_i(x) = 0$.
  The \emph{active switching set} $\alpha$
  consists of all indices of active switching variables, i.e.
  \begin{equation*}
    \alpha(x) \define \defset{i \in \set{1, \dots, s}}{z_i(x) = 0}.
  \end{equation*}
  We denote the number of active switching variables by $\calp[(x)]$
  and the number of inactive ones by $\csig[(x)] \define s - \calp[(x)]$.
\end{definition}

Unconstrained optimization problems in abs-normal form
have recently been introduced by \cite{Griewank_Walther_2016},
and have been shown to come with favorable theoretical properties
that lead to globally convergent solution methods
based on piecewise linearizations
that can be generated by algorithmic differentiation \cite{Griewank_Walther_2018, Griewank_Walther_2019}.
When nonsmooth equality and inequality constraints are added,
the resulting abs-normal NLPs can be compared to MPCCs,
for which a well-established theoretical framework has been developed over the past decades \cite{Luo_Pang_Ralph_1996}.
Comparing the two classes of nonsmooth optimization problems,
the authors reported in \cite{Hegerhorst_et_al:2019:MPEC1}
that they basically form the same problem class,
although with a different representation of nonsmoothness
and not necessarily with the same regularity properties.

\subsection*{Literature}

The abs-normal NLPs considered here are a direct generalization of
unconstrained abs-normal problems developed by Griewank and Walther
\cite{Griewank_2013,Griewank_Walther_2016}.
These problems offer particularly attractive theoretical features
when generalizing KKT theory and stationarity concepts,
and they are tractable by sophisticated algorithms with guaranteed convergence
based on piecewise linearizations
and using algorithmic differentiation techniques \cite{Griewank_Walther_2018,Griewank_Walther_2019}.

Another important class of nonsmooth optimization problems are Mathematical Programs with Complementarity (or Equilibrium) Constraints (MPCCs, MPECs); an overview can be found in the book \cite{Luo_Pang_Ralph_1996}.
Since standard theory for smooth optimization problems cannot be applied, new constraint qualifications and corresponding optimality conditions were introduced.
By now, there is a large body of literature on MPCCs, and we refer to \cite{Ye:2005} for an overview of the basic concepts and theory.
In this paper, constraint qualifications for MPCCs in the sense of linear independence and Mangasarian Fromowitz are considered.
Further, the corresponding stationarity concept (S-stationarity) as well as first and second order optimality conditions are studied.
Details can be found in \cite{Scheel_Scholtes_2000}, \cite{Luo_Pang_Ralph_1996} and \cite{FlegelDiss}.

In \cite{Hegerhorst_et_al:2019:MPEC1} we have shown that unconstrained
optimization problems in abs-normal form are a subclass of MPCCs
and we have studied regularity concepts of linear independence type,
Mangasarian-Fromovitz type and Abadie type.
We have also shown that abs-normal NLPs with general constraints
are equivalent to the class of MPCCs.
In \cite{Hegerhorst_Steinbach:2019} we have generalized
optimality conditions of unconstrained abs-normal problems
to the case with equality and inequality constraints
under the linear independence kink qualification.
More details and additional information about these results as well as about the results in this paper can be found in \cite{Hegerhorst-Schultchen2020}.

\subsection*{Contributions}

We develop a deeper understanding of the commonalities
of these problem classes of NLPs in abs-normal form on the one hand, and MPCCs on the other. We provide a detailed comparative study
of general abs-normal NLPs and MPCCs,
considering constraint qualifications
of linear independence type and Mangasarian Fromovitz type
for the standard formulation
and for a reformulation with absolute value slacks
that was suggested in \cite{Hegerhorst_Steinbach:2019}.
In particular, we show that corresponding constraint qualifications
of abs-normal NLPs and MPCCs are equivalent
and that the linear independence type constraint qualifications
are preserved by the slack reformulation
while this is not the case for a Mangasarian-Fromovitz type constraint qualification.
We then compare optimality conditions of first and second order
for abs-normal NLPs and MPCCs
under the respective linear independence type constraint qualifications.
We show equivalence of the respective first order necessary conditions, kink stationarity and strong stationarity.
We also show how second order conditions for MPCCs can be carried over to abs-normal NLPs. Under suitable additional assumptions, we prove equivalence
of positive (semi-)definiteness of the associated reduced Hessians,
which gives correspondences of second order necessary and sufficient conditions.

We expect that our theoretical results will contribute
to the understanding and further development
of rigorous solution algorithms for abs-normal NLPs. We also expect the results to
facilitate a possible transferral of active-signature methods for abs-normal forms, such as SALMIN \cite{Griewank_Walther_2018}, to MPCCs.

\subsection*{Structure} The remainder of this article is structured as follows.
In \cref{sec:anf-formulations}
we present the general abs-normal NLP and its slack reformulation,
and we formulate the associated kink qualifications and compare them.
In \cref{sec:mpcc} we introduce counterpart MPCCs
for the two formulations of abs-normal NLPs
and compare the associated MPCC-constraint qualifications.
Then, we show equivalence of the regularity concepts
for abs-normal NLPs and MPCCs in \cref{sec:regularity}.
Finally, in \cref{sec:opt-cond} we state optimality conditions
of first and second-order for abs-normal NLPs and MPCCs
and prove equivalence relations between them.
We conclude in \cref{sec:conclusion} and give a brief outlook.

\section{Abs-Normal NLPs}
\label{sec:anf-formulations}

In this section we consider two formulations for non-smooth NLPs in abs-normal form
that differ in the treatment of inequality constraints.

\subsection{General Abs-Normal NLPs}
\label{sec:anf-inequalities}

In this paragraph we consider abs-normal NLPs with equalities and inequalities,
obtained by substituting the constraints representation
in abs-normal form \eqref{eq:anf}
into the general non-smooth problem \eqref{eq:nlp}.
Note that we use the variables $(t,\zt)$ instead of $(x,z)$.

\begin{definition}[Abs-Normal NLP]
   Let $\Domt$ be an open subset of $\R^{n_t}$.
   A non-smooth NLP is called an \emph{abs-normal NLP}
   if functions $f \in \Cd(\Domt,\R)$, $\cE \in \Cd(\Domtzt,\R^{m_1})$,
   $\cI \in \Cd(\Domtzt,\R^{m_2})$, and $\cZ \in \Cd(\Domtzt,\R^{s_t})$ with $d \ge 1$ exist
   such that the NLP reads
   \begin{align*}
     \minst[t, \zt]{f(t)}
     & \cE(t, \abs{\zt}) = 0, \\
     & \cI(t, \abs{\zt}) \ge 0, \tag{I-NLP} \label{eq:i-anf} \\
     & \cZ(t, \abs{\zt}) - \zt = 0,
   \end{align*}
   where $\Domzt$ is open and symmetric and
   $\pd_2 \cZ(x,\abs{\zt})$ is strictly lower triangular.
\end{definition}
   The feasible set of \eqref{eq:i-anf} is denoted by
   \begin{align*}
     \Fabs &\define
     \begin{defarray}{(t, \zt)}
       \cE(t, \abs\zt) = 0,\ \cI(t, \abs\zt) \ge 0, \\
       \cZ(t, \abs\zt) - \zt = 0
     \end{defarray}
     \\
     &\hphantom:=
     \defset{(t, \zt(t))}
     {t \in \Domt,\ \cE(t, \abs{\zt(t)}) = 0,\ \cI(t, \abs{\zt(t)}) \ge 0}.
   \end{align*}

In contrast to standard NLP theory,
we do not count equalities as active constraints.

\begin{definition}[Active Inequality Set]
  Let $(t,\zt(t))\in \Fabs$. We call the constraint
  $i\in\setI$ \emph{active} if $c_i(t,\abs{\zt(t)})=0$.
  The \emph{active set} $\setA(t)$ consists of all indices of active constraints,
  \begin{equation*}
    \setA(t)=\defset{i\in\setI}{c_i(t,\abs{\zt(t)})=0}.
  \end{equation*}
  We denote the number of active inequality constraints by $\card{\setA(t)}$.
\end{definition}

To define the linear independence kink qualification as well as the interior direction kink qualification
for \eqref{eq:i-anf} we need its Jacobians.

\begin{definition}[Jacobians]
  \label{def:active-jacobian}
\ifcase1
  Let $\cE \in \Cone(\Domtzt,\R^{m_1})$, $\cI \in \Cone(\Domtzt,\R^{m_2})$ and
  $\cZ \in \Cone(\Domtzt,\R^{s_t})$.
\else
  Consider the abs-normal NLP \eqref{eq:i-anf}.
\fi
  For $(t,\zt(t)) \in \Fabs$ set $\setA = \setA(t)$, $\alpha = \alpha(t)$, $\sigma = \sigma(t)$,
  $\Sigma = \diag(\sigma)$, and $\cA = [c_i]_{i \in \setA}$.
  The \emph{equality-constraints Jacobian} is
  \begin{align*}
    \JE(t)
    \define
    \pd_t \cE(t,\Sigma \zt(t))
    &=
    \pd_1 \cE(t,\Sigma \zt(t)) +
    \pd_2 \cE(t,\Sigma \zt(t)) \Sigma \pd_t \zt(t) \\
    &=
    \pd_1 \cE(t,\abs{\zt(t)}) +
    \pd_2 \cE(t,\abs{\zt(t)}) \Sigma \pd_t \zt(t),
  \end{align*}
  the \emph{active inequality Jacobian} is
  \begin{align*}
    \JA(t)
    \define
    \pd_t \cA(t,\Sigma \zt(t))
    &=
    \pd_1 \cA(t,\Sigma \zt(t)) +
    \pd_2 \cA(t,\Sigma \zt(t)) \Sigma \pd_t \zt(t) \\
    &=
    \pd_1 \cA(t,\abs{\zt(t)}) +
    \pd_2 \cA(t,\abs{\zt(t)}) \Sigma \pd_t \zt(t),
  \end{align*}
  and the \emph{active switching Jacobian} is
  \begin{align*}
    \Jalp(t)
    \define
    \left[ e_i^T \pd_t \zt(t) \right]_{i \in \alpha}
    =
    \left[
      e_i^T [I - \pd_2 \cZ(t,\abs{\zt(t)}) \Sigma]^{-1}
      \pd_1\cZ(t,\abs{\zt(t)})
    \right]_{i \in \alpha}
    .
  \end{align*}
\end{definition}

\begin{definition}[Linear Independence Kink Qualification]
  \label{def:likq}
\ifcase1
  Let $\cE \in \Cone(\Domtzt,\R^{m_1})$, $\cI\in \Cone(\Domtzt,\R^{m_2})$ and
  $\cZ \in \Cone(\Domtzt,\R^{s_t})$.
\fi
  Let $(t,\zt(t))\in\Fabs$.
  We say that the \emph{linear independence kink qualification (LIKQ)}
  holds for \eqref{eq:i-anf} at $t$ if
   \begin{align*}
   \Jabs(t)=
   \begin{bmatrix}
     \JE(t)\\
     \JA(t)\\
     \Jalp(t)
   \end{bmatrix}
   =
   \begin{bmatrix}
     \pd_t \cE(t,\abs{\zt(t)}) \\
     \pd_t \cA(t,\abs{\zt(t)}) \\
     [e_i^T\pd_t \zt(t)]_{i\in \alpha}
   \end{bmatrix}
   \in \R^{(m_1 + \card{\setA} + \calp) \x n_t}
   \end{align*}
   has full row rank $m_1 + \card{\setA} + \calp$.
\end{definition}

\begin{definition}[Interior Direction Kink Qualification]
  \label{def:idkq}
\ifcase1
  Let $\cE \in \Cone(\Domtzt,\R^{m_1})$, $\cI \in \Cone(\Domtzt,\R^{m_2})$ and
  $\cZ \in \Cone(\Domtzt,\R^{s_t})$.
\fi
  Let $(t,\zt(t))\in\Fabs$.
  We say that the \emph{interior direction kink qualification (IDKQ)}
  holds for \eqref{eq:i-anf} at $t$ if
   \begin{align*}
   \begin{bmatrix}
     \JE(t)\\
     \Jalp(t)
   \end{bmatrix}
   =
   \begin{bmatrix}
     \pd_t \cE(t,\abs{\zt(t)}) \\
     [e_i^T\pd_t \zt(t)]_{i\in \alpha}
   \end{bmatrix}
   \in \R^{(m_1 + \calp) \x n_t}
   \end{align*}
   has full row rank $m_1 + \calp$ and if there exists a vector $d\in\R^{n_t}$ such that
   \begin{equation*}
     \JE(t)d=0, \quad
     \Jalp(t)d=0, \quad \text{and} \quad
     \JA(t)d>0.
   \end{equation*}
\end{definition}

For the general abs-normal NLP \eqref{eq:i-anf} considered here,
IDKQ actually generalizes MFCQ from the smooth case
and corresponds to MPCC-MFCQ, as we will show below.
We cannot use the canonical name MFKQ, however,
since Griewank and Walther have already defined MFKQ
as a different weakening of LIKQ in \cite{Griewank_Walther_2017}.
We believe that other possible names like
``Abs-normal MFKQ'' or ``Constrained MFKQ''
would produce confusion rather than clarification
and hence suggest the descriptive name ``Interior Direction KQ''.

The following example from \cite{Scheel_Scholtes_2000} (converted from MPCC form to abs-normal NLP form) shows that IDKQ is weaker than LIKQ
in the presence of inequality constraints.

\begin{example}[IDKQ is weaker than LIKQ]\label{ex:idkqlikq}
     Consider the problem
     \begin{align*}
       \minst[t \in \R^3, \zt \in \R]{t_1 + t_2 - t_3}
       & t_1 + t_2 - \abs\zt = 0, \\
       & 4 t_1 - t_3 \ge 0, \\
       & 4 t_2 - t_3 \ge 0, \\
       & t_1 - t_2 - \zt = 0,
     \end{align*}
     with solution $t^*=(0,0,0)$ and $(\zt)^*=0$. We compute
     \begin{align*}
       \JA(t^*)=
       \begin{bmatrix}
        4 & 0 & -1 \\
        0 & 4 & -1
       \end{bmatrix},\quad
       \JE(t^*)=
       \begin{bmatrix}
        1 & 1 & 0
       \end{bmatrix},
       \qtextq{and}
       \Jalp(t^*)=
       \begin{bmatrix}
        1 & -1 & 0
       \end{bmatrix}.
     \end{align*}
     Here, LIKQ is not satisfied but IDKQ is satisfied with $d=(0,0,-1)$.
\end{example}

\subsection{Abs-Normal NLPs with Inequality Slacks}
\label{sec:anf-equalities}

In this paragraph we consider abs-normal NLPs with slack variables introduced for all
inequalities. We make use of the absolute value of a slack variable, an idea due to Griewank.
This results in a class of purely equality-constrained abs-normal NLPs,
which simplifies the derivation of optimality conditions under the LIKQ,
see \cite{Hegerhorst_Steinbach:2019} and \cref{sec:opt-cond}.

Using slack variables $w\in \R^{m_2}$,
we obtain the following reformulation of \eqref{eq:nlp}:
\begin{align*}
  \minst[t, w]{f(t)}
  & g(t) = 0, \\
  & h(t) - \abs{w} = 0.
\end{align*}
Then, we express $g$ and $h$ in abs-normal form as in \eqref{eq:anf}
and introduce additional switching variables $\zw$ to handle $\abs{w}$.
This approach leads to the next definition.

\begin{definition}[Abs-Normal NLP with Inequality Slacks]
\ifcase1
  Let $\Domt$ be an open subset of $\R^{n_t}$.
  A non-smooth NLP is called \emph{abs-normal NLP with inequality slacks}
  if functions $f \in \Cd(\Domt,\R)$, $\cE \in \Cd(\Domtzt,\R^{m_1})$,
  $\cI \in \Cd(\Domtzt,\R^{m_2})$,   $\cZ \in \Cd(\Domtzt,\R^{s_t})$
  with $d \ge 1$ exist such that the NLP reads
\or
  An abs-normal NLP posed in the following form is called
  an \emph{abs-normal NLP with inequality slacks}:
\fi
  \begin{align*}
    \minst[t, w, \zt, \zw]{f(t)}
    & \cE(t, \abs\zt) = 0, \\
    & \cI(t, \abs\zt) - \abs\zw = 0, \tag{E-NLP} \label{eq:e-anf} \\
    & \cZ(t, \abs\zt) - \zt = 0, \\
    & w-\zw = 0,
  \end{align*}
  where $\Domzt$ is open and symmetric and
  $\pd_2 \cZ(x,\abs{\zt})$ is strictly lower triangular.
\end{definition}
  The feasible set of \eqref{eq:e-anf} is a lifting of $\Fabs$,
  \begin{align*}
    \Feabs
    &\define
    \begin{defarray}{(t, w, \zt, \zw)}
      \cE(t, \abs\zt) = 0,\ \cI(t, \abs\zt) - \abs\zw = 0, \\
      \cZ(t, \abs\zt) - \zt = 0,\ w - \zw = 0
    \end{defarray}
    \\
    &\hphantom:=
    \Defset{(t, w, \zt, \zw)}
    {(t, \zt) \in \Fabs,\ w = \zw,\ \abs\zw = \cI(t, \abs\zt)}.
  \end{align*}

Using the dependence of $\zt$ and $\zw$ of $t$ and $w$, the feasible set can be written as
  \begin{align*}
    \Feabs
    &=
    \begin{defarray}{(t, w, \zt(t), \zw(w))}
      t\in\Domt,\ \cE(t, \abs{\zt(t)}) = 0,\\
      \cI(t, \abs{\zt(t)}) - \abs{\zw(w)} = 0,\ w - \zw(w) = 0
    \end{defarray}
    \\
    &=
    \Defset{(t, w, \zt(t), \zw(w))}
    {(t,\zt(t)) \in \Fabs,\ w = \zw(w),\ \abs{\zw(w)} = \cI(t, \abs{\zt(t)})}.
  \end{align*}

We split the active switching set into subsets for variables $t$ and $w$
as $\alpha = (\alpt, \alpw)$.

\begin{remark}
  Note that introducing $\abs{w}$ converts inequalities
  to pure equalities without a nonnegativity condition
  for the slack variables $w$.
\ifcase1
  In \cite{Hegerhorst_Steinbach:2019} we have used this formulation
  to simplify the presentation of first and second order conditions
  for the general non-smooth NLP \eqref{eq:nlp}
  under the linear independence kink qualification (LIKQ).
\fi
  However, the slack reformulation has some subtle issues.
  Subsequently we will show that, in contrast to LIKQ,
  IDKQ is \emph{not} preserved.
  Moreover, one cannot eliminate
  the equation $w - \zw = 0$ (and hence $\zw$ or~$w$) in \eqref{eq:e-anf}
  since this would destroy the abs-normal form.
  Finally, the slack $w$ is not uniquely determined
  since the signs of nonzero components $w_i$ can be chosen arbitrarily,
  yielding a set of $2^{m_2 - \calpw}$ choices,
  $W(t) \define \defset{w}{\abs{w} = \cI(t, \abs{\zt(t)})}$.
\end{remark}

\ifcase1
Clearly, $\alpw = \setA$, and hence $\Feabs$ consists of $2^{\setA^c}$
components of the form $\Fabs \x \set{(w,\zw)}$,
given by all sign combinations of $w_i = \zw_i$ for $i \in \setA^c$.
\fi

\begin{lemma}
  \label{lem:e-likq}
\ifcase1
  Let $\cE \in \Cone(\Domtzt,\R^{m_1})$, $\cI\in \Cone(\Domtzt,\R^{m_2})$ and
  $\cZ \in \Cone(\Domtzt,\R^{s_t})$.
  Then, LIKQ for \eqref{eq:e-anf} at $(t, w) \in \Domt \x \R^{m_2}$
\else
  Consider $(t, w,\zt(t),\zw(w)) \in \Feabs$. Then, LIKQ for \eqref{eq:e-anf} at $(t,w)$
\fi
  is full row rank of
  \begin{align*}
  \Jeabs(t,w)=
   \begin{bmatrix}
     \pd_t \cE(t,\abs{\zt(t)}) & 0 \\
     \pd_t \cI(t,\abs{\zt(t)}) & -\Sigma^w\\
     [e_i^T\pd_t \zt(t)]_{i\in \alpt} & 0 \\
     0 & [e_i^T I]_{i\in \alpw}
   \end{bmatrix}
   \in \R^{(m_1 + m_2 + \calpt + \calpw) \x (n_t+m_2)}.
   \end{align*}
\end{lemma}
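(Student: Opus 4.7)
The plan is to apply \cref{def:likq} directly to \eqref{eq:e-anf}, interpreting the latter as an abs-normal NLP in its own right with primary variables $(t,w)\in\R^{n_t+m_2}$, stacked switching variables $(\zt,\zw)\in\R^{s_t+m_2}$, no inequalities, and $m_1+m_2$ equality constraints given by $\cE(t,\abs\zt)=0$ and $\cI(t,\abs\zt)-\abs\zw=0$.

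First I would verify that \eqref{eq:e-anf} actually fits the abs-normal template. The combined switching function is $\bar\cZ(t,w,\abs\zt,\abs\zw) \define (\cZ(t,\abs\zt),\,w)$; its derivative with respect to $(\abs\zt,\abs\zw)$ is block diagonal with entries $\pd_2\cZ$ and $0$, hence strictly lower triangular because $\pd_2\cZ$ is. Applying the implicit function theorem with fixed signature as in the discussion preceding \cref{def:active-jacobian} then yields $\zt=\zt(t)$ exactly as in the $t$-only system, while the trivial equation $w-\zw=0$ gives $\zw(w)=w$ directly, so that $\pd_w\zw(w)=I$.

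Second, I would compute the equality-Jacobian block by the chain rule, using $\abs{\zt}=\Sigma^t\zt(t)$ and $\abs{\zw}=\Sigma^w\zw(w)=\Sigma^w w$ for locally constant signatures. The group $\cE(t,\abs\zt)$ is $w$-independent and collapses to $[\pd_t\cE(t,\abs{\zt(t)}),\,0]$, while $\cI(t,\abs\zt)-\abs\zw$ yields $[\pd_t\cI(t,\abs{\zt(t)}),\,-\Sigma^w]$ after using $\pd_w\abs{\zw(w)}=\Sigma^w$. These are precisely the first two block rows of $\Jeabs(t,w)$.

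Third, the active switching rows follow from the observation that the stacked Jacobian $\pd_{(t,w)}(\zt(t),\zw(w))$ is block diagonal with blocks $\pd_t\zt(t)$ and $I$, which one sees either directly or by evaluating the formula of \cref{def:active-jacobian} at the block structure of $\bar\cZ$. Selecting rows indexed by $\alpt$ gives $[e_i^T\pd_t\zt(t),\,0]_{i\in\alpt}$ and those indexed by $\alpw$ give $[0,\,e_i^T I]_{i\in\alpw}$, matching the final two block rows and yielding the claimed $(m_1+m_2+\calpt+\calpw)\x(n_t+m_2)$ matrix. I do not anticipate a real obstacle: the only subtlety is noticing that the slack equation forces $\zw(w)=w$ so that the $\alpw$-rows reduce to selected rows of the identity; the remainder is routine block-matrix bookkeeping.
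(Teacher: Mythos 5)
Your proposal is correct and follows essentially the same route as the paper: both recast \eqref{eq:e-anf} in the compact form with $x=(t,w)$, $z=(\zt,\zw)$, $\bcE=(\cE,\cI-\abs\zw)$, $\bcZ=(\cZ,w)$ and then evaluate the Jacobians of \cref{def:active-jacobian} on this block structure. Your write-up merely spells out the block computations (including the check that the stacked $\pd_2\bcZ$ remains strictly lower triangular and that $\zw(w)=w$) that the paper leaves implicit.
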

\begin{proof}
Set $x=(t,w)$, $z=(\zt,\zw)$, $\_f(x)=f(t)$,
$\bcE(x,\abs{z})=(\cE(t,\abs{\zt}), \cI(t,\abs{\zt})-\abs{\zw})$, and
$\bcZ(x,\abs{z})=(\cZ(t,\abs{\zt}),w)$.
Then, we can write \eqref{eq:e-anf} compactly as
\begin{align*}
  \minst[x, z]{\_f(x)}
  & \bcE(x, \abs{z}) = 0, \tag{$\overline{\text{E-NLP}}$} \label{eq:be-nlp} \\
  & \bcZ(x, \abs{z}) - z = 0,
\end{align*}
and compute $\bJE$ and $\bJalp$ from \cref{def:active-jacobian} using the special
structure of \eqref{eq:e-anf}.
The resulting matrix $\Jeabs(x)=\begin{bmatrix} \bJE(x)^T & \bJalp(x)^T \end{bmatrix}^T$
in \cref{def:likq} has the form above.
\end{proof}

\begin{remark}
  \label{rem:e-likq}
  Clearly, the rank of $\Jeabs$ does not depend on the signs
  of $\pm1$ entries in $\Sigma^w$ but only on their positions.
  Hence, LIKQ does not depend on the particular choice of~$w$.
  Otherwise it would not make sense to consider \eqref{eq:e-anf}.
\end{remark}

Note that, since the abs-normal NLP \eqref{eq:e-anf}
does not contain any inequalities,
the concept of IDKQ is equivalent to LIKQ here.
This is in contrast to the standard reformulation
of smooth NLP inequalities as equalities with nonnegative slacks
where the validity of LICQ and MFCQ are both unaffected.

\subsection{Relations of Kink Qualifications for Abs-Normal NLPs}

In this paragraph we discuss the relations of kink qualifications
for the two different formulations of abs-normal NLPs.
We use the set $W(t)$ from above.

\begin{theorem}\label{th:likq}
 LIKQ for \eqref{eq:i-anf} holds at $(t,\zt(t))\in\Fabs$ if and only if LIKQ for \eqref{eq:e-anf} holds at
 $(t,w,\zt(t),\zw(w))\in\Feabs$ for any (and hence all) $w \in W(t)$.
\end{theorem}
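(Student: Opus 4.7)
The plan is to establish the identity $\rank \Jeabs(t,w) = m_2 + \rank \Jabs(t)$. Since, by \cref{lem:e-likq} and \cref{def:likq}, the matrix $\Jeabs$ has exactly $m_1 + m_2 + \calpt + \calpw$ rows while $\Jabs$ has $m_1 + \card{\setA} + \calp$ rows, this identity immediately yields that $\Jeabs$ has full row rank if and only if $\Jabs$ does. I would begin with the observation that at any $(t, w, \zt(t), \zw(w)) \in \Feabs$ one has $\abs{w} = \abs{\zw(w)} = \cI(t, \abs{\zt(t)})$, so $\alpw = \setA$ and in particular $\calpw = \card{\setA}$. Consequently the signature $\Sigma^w$ vanishes on the $\setA$-positions and has $\pm 1$ entries on the $\setA^c$-positions.

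Next, I would reorder the $\cI$-rows and the $w$-columns of $\Jeabs(t,w)$ according to the partition into $\setA$ and $\setA^c$. For $i \in \setA$ the $w$-block of the $\cI$-row vanishes because $\Sigma^w_{ii} = 0$, while its $t$-part is, by the chain rule used in \cref{def:active-jacobian}, precisely the corresponding row of $\JA$. For $i \in \setA^c$ the $w$-block is a single $\pm 1$ entry in the $w_{\setA^c}$ column indexed by $i$. Finally, the selector $[e_i^T I]_{i \in \alpw}$ becomes the identity $I_{\card{\setA}}$ acting on the $w_\setA$ columns. After this rearrangement, the $w_\setA$ columns carry nonzero entries only in the last row block (where they equal $I_{\card{\setA}}$), and the $w_{\setA^c}$ columns carry nonzero entries only in the $\setA^c$-inequality rows (where they equal the nonsingular diagonal $-\Sigma^w_{\setA^c\setA^c}$).

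Then a direct block rank argument closes the proof. The last row block is the unique source of nonzeros in the $w_\setA$ columns and so contributes $\card{\setA}$ rows that are linearly independent of everything else. Likewise, the $\setA^c$-inequality rows are the unique source in the $w_{\setA^c}$ columns and so contribute a further $m_2 - \card{\setA}$ independent rows. Deleting these row and column blocks leaves precisely
\[
\begin{bmatrix} \pd_t \cE(t,\abs{\zt(t)}) \\ \JA(t) \\ \Jalp(t) \end{bmatrix} = \Jabs(t),
\]
giving $\rank \Jeabs = m_2 + \rank \Jabs$ as claimed. I do not anticipate a deep obstacle; the only point requiring care is the independence of the conclusion from the specific $w \in W(t)$. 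This is immediate because the argument uses only the zero pattern of $\Sigma^w$ (determined by $\setA$) and the nonsingularity of its nonzero diagonal part, never the concrete $\pm 1$ signs, in agreement with \cref{rem:e-likq}.
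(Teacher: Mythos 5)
Your proposal is correct and follows essentially the same route as the paper: the paper's proof is a one-line "comparison of $\Jabs$ and $\Jeabs$" resting on exactly the two facts you isolate, namely $\alpw(w)=\setA(t)$ and the zero/$\pm1$ pattern of $\Sigma^w$, and your block-rank elimination of the $w_\setA$ and $w_{\setA^c}$ columns is just a careful writing-out of that comparison, including the correct observation that independence of the choice of $w\in W(t)$ needs only the sparsity pattern of $\Sigma^w$.
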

\begin{proof}
This follows immediately by comparison of $\Jabs$ and $\Jeabs$ using that
\begin{equation*}
 \alpw(w)=\defset{i\in\setI}{w_i=0}=\defset{i\in\setI}{c_i(t,\zt(t))=0}=\setA(t)
\end{equation*}
and
\begin{equation*}
 \Sigma^w=\diag(\sigw) \quad \text{with} \quad
 \sigw_i=\sign(w_i)
 =\begin{cases} 0, &i\in\setA(t), \\ \pm 1, &i\notin\setA(t). \end{cases}
 \qedhere
\end{equation*}
\end{proof}

\begin{theorem}\label{th:idkq}
 IDKQ for \eqref{eq:i-anf} holds at $(t,\zt(t))\in\Fabs$ if
 IDKQ for \eqref{eq:e-anf} holds at \ifnum\FmtChoice=2 the lifted point \fi
  $(t,w,\zt(t),\zw(w))\in\Feabs$
 for any (and hence all) $w \in W(t)$.
 The converse is not true.
\end{theorem}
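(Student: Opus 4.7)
The plan is to exploit the fact that \eqref{eq:e-anf} has no inequality constraints, so IDKQ collapses to LIKQ there, and then transfer the resulting condition via \cref{th:likq}. First, I would observe that the active inequality Jacobian entering \cref{def:idkq} is empty for \eqref{eq:e-anf}, hence its strict inequality condition is vacuously satisfied. Thus IDKQ for \eqref{eq:e-anf} at $(t,w,\zt(t),\zw(w))$ reduces to full row rank of the equality-and-active-switching Jacobian, which is exactly the matrix $\Jeabs(t,w)$ from \cref{lem:e-likq}. In other words, IDKQ for \eqref{eq:e-anf} coincides with LIKQ for \eqref{eq:e-anf}, which by \cref{th:likq} is equivalent to LIKQ for \eqref{eq:i-anf}.

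Second, I would deduce IDKQ for \eqref{eq:i-anf} from LIKQ for \eqref{eq:i-anf}. The row-rank condition in \cref{def:idkq} is inherited immediately, since the stacked matrix $[\JE(t)^T,\Jalp(t)^T]^T$ is a submatrix of the full-row-rank matrix $\Jabs(t)$. For the required direction, I would solve the linear system $\JE(t)d=0$, $\Jalp(t)d=0$, $\JA(t)d=\mathbf{1}$, whose consistency follows from the full row rank of $\Jabs(t)$; any solution $d$ then satisfies $\JA(t)d>0$ componentwise, as demanded by IDKQ.

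For the converse, I would invoke \cref{ex:idkqlikq} directly: IDKQ holds there for \eqref{eq:i-anf} at $t^*=(0,0,0)$ with $d=(0,0,-1)$, but $\Jabs(t^*)\in\R^{4\times 3}$ cannot have full row rank, so LIKQ for \eqref{eq:i-anf} fails. By the equivalences established in the first step, IDKQ for \eqref{eq:e-anf} must then also fail at every lifted point $(t^*,w,\zt(t^*),\zw(w))$ with $w\in W(t^*)$, disproving the implication in the other direction.

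The main obstacle is conceptual rather than computational: one must recognize that the absolute-value slack reformulation eliminates all inequalities in \eqref{eq:e-anf}, so IDKQ for \eqref{eq:e-anf} is really the stronger LIKQ condition in disguise, not a genuine weakening. Once this is seen, the statement reduces to a routine invocation of \cref{th:likq} combined with the counterexample \cref{ex:idkqlikq}.
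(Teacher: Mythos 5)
Your proposal is correct and follows exactly the paper's argument: IDKQ for \eqref{eq:e-anf} collapses to LIKQ there because no inequalities remain, \cref{th:likq} transfers this to LIKQ for \eqref{eq:i-anf}, which implies IDKQ for \eqref{eq:i-anf}, and \cref{ex:idkqlikq} refutes the converse. You merely spell out two steps the paper leaves implicit (the solvability of $\JE(t)d=0$, $\Jalp(t)d=0$, $\JA(t)d=\mathbf{1}$ from full row rank of $\Jabs(t)$, and the rank-count argument in the counterexample), both of which are fine.
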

\begin{proof}
Since \eqref{eq:e-anf} has no inequalities,
the concepts of IDKQ and LIKQ coincide.
LIKQ for \eqref{eq:e-anf} is equivalent to
LIKQ for \eqref{eq:i-anf} by \cref{th:likq}, and
LIKQ for \eqref{eq:i-anf} implies IDKQ for \eqref{eq:i-anf}.
The converse does not hold since LIKQ for \eqref{eq:i-anf}
is stronger then IDKQ as we have shown in \cref{ex:idkqlikq}.
\end{proof}


\section{Counterpart MPCCs}\label{sec:mpcc}

In this section we introduce MPCC counterpart problems for the two formulations
\eqref{eq:i-anf} and \eqref{eq:e-anf}. Then, we have a quick look at relations between them.

\subsection{Counterpart MPCC for the General Abs-Normal NLP}\label{subsec:i-mpcc}

To reformulate \eqref{eq:i-anf} as an MPCC, we partition $\zt$ into its nonnegative part and the modulus of its
nonpositive part, $\ut \define [\zt]^+\define\max(\zt,0)$ and $\vt \define[\zt]^-\define \max(-\zt,0)$.
Then, we require complementarity of these two variables
to replace $\abs{\zt}$ by $\ut+\vt$ and $\zt$ itself by $\ut-\vt$.

\begin{definition}[Counterpart MPCC of \eqref{eq:i-anf}]
  The \emph{counterpart MPCC} of the abs-normal NLP \eqref{eq:i-anf} reads
   \begin{subequations}
   \begin{align*}
     \minst[t, \ut, \vt]{f(t)}
     & \cE(t, \ut + \vt) = 0, \\
     & \cI(t, \ut + \vt) \ge 0, \tag{I-MPCC} \label{eq:i-mpcc} \\
     & \cZ(t, \ut + \vt) - (\ut - \vt) = 0, \\
     & 0 \le \ut \compl \vt \ge 0,
   \end{align*}
   where $\ut, \vt\in\R^{s_t}$.
   \end{subequations}
\end{definition}
   The feasible set of \eqref{eq:i-mpcc} is denoted by
   \begin{equation*}
     \Fmpcc \define
     \begin{defarray}{(t, \ut, \vt)}
       \cE(t, \ut + \vt) = 0,\ \cI(t, \ut + \vt) \ge 0, \\
       \cZ(t, \ut + \vt) = \ut - \vt,\ 0 \le \ut \compl \vt \ge 0
     \end{defarray}
     .
   \end{equation*}

\begin{lemma}
  \label{lem:hom-F-i}
  Given an abs-normal NLP \eqref{eq:i-anf}
  and its counterpart MPCC \eqref{eq:i-mpcc},
  we have a homeomorphism $\phi\: \Fmpcc \to \Fabs$ defined as
  \begin{align*}
    \phi(t, \ut, \vt) &= (t, \ut - \vt), &
    \Inv\phi(t, \zt) &= (t, [\zt]^+, [\zt]^-).
  \end{align*}
\end{lemma}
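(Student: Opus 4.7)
The plan is to verify that $\phi$ and the claimed $\phi^{-1}$ are mutually inverse maps between the two feasible sets, and that both are continuous. The key observation is that under the complementarity condition $0 \le \ut \compl \vt \ge 0$, for every index $i$ one of $\ut_i, \vt_i$ vanishes while the other is nonnegative, which yields the pointwise identities
\begin{equation*}
  \ut_i - \vt_i = \zt_i, \qquad \ut_i + \vt_i = \abs{\zt_i}, \qquad \ut_i = [\zt_i]^+, \qquad \vt_i = [\zt_i]^-.
\end{equation*}

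First I would show that $\phi$ maps $\Fmpcc$ into $\Fabs$. Given $(t, \ut, \vt) \in \Fmpcc$, set $\zt \define \ut - \vt$; the identity $\ut + \vt = \abs{\zt}$ above then converts the three constraints of \eqref{eq:i-mpcc} into the three constraints of \eqref{eq:i-anf}, so $(t, \zt) \in \Fabs$. Conversely, for $(t, \zt) \in \Fabs$, define $\ut \define [\zt]^+$ and $\vt \define [\zt]^-$. By construction $\ut, \vt \ge 0$ with $\ut_i \vt_i = 0$, so the complementarity constraint holds, and $\ut + \vt = \abs{\zt}$ together with $\ut - \vt = \zt$ turns the constraints of \eqref{eq:i-anf} back into those of \eqref{eq:i-mpcc}. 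Hence $\phi^{-1}$ maps $\Fabs$ into $\Fmpcc$.

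Next I would verify that the two maps are inverse to each other. In one direction, $\phi(\phi^{-1}(t, \zt)) = (t, [\zt]^+ - [\zt]^-) = (t, \zt)$. In the other, for $(t, \ut, \vt) \in \Fmpcc$ with $\zt = \ut - \vt$, complementarity gives $[\zt]^+ = \ut$ and $[\zt]^- = \vt$, so $\phi^{-1}(\phi(t, \ut, \vt)) = (t, \ut, \vt)$. Continuity of $\phi$ is immediate since it is the restriction of a linear map, and continuity of $\phi^{-1}$ follows because the positive and negative part functions $[\cdot]^+, [\cdot]^-$ are continuous.

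There is no real obstacle here; the argument is essentially bookkeeping with the positive/negative part decomposition under complementarity. The only place requiring a moment's care is confirming that $\ut + \vt = \abs{\ut - \vt}$ precisely on the complementarity set, which is what makes the substitution between $\cE(t, \abs{\zt})$ and $\cE(t, \ut + \vt)$ (and similarly for $\cI$ and $\cZ$) reversible.
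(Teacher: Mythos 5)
Your proof is correct and fills in exactly the verification the paper leaves as ``Obvious'': the well-definedness of both maps via the identity $\ut+\vt=\abs{\ut-\vt}$ under complementarity, their mutual inversion, and continuity of the positive/negative part functions. No issues.
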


\begin{proof}
  Obvious.
\end{proof}

\pagebreak[2]

Just like the active switching set of the abs-normal NLP, we define index sets of the counterpart MPCC.

\begin{definition}[Index Sets]
  We denote by $\setUt_0\define\defset{i\in \set{1, \dots, s_t}}{\ut_i=0}$
  the set of indices of active inequalities $\ut_i\geq 0$,
  and by $\setUt_+\define\defset{i\in\set{1, \dots, s_t}}{\ut_i>0}$
  the set of indices of inactive inequalities $\ut_i\geq 0$.
  Analogous definitions hold of $\setVt_0$ and $\setVt_+$.
  By $\setDt\define\setUt_0\cap\setVt_0$ we denote
  the set of indices of non-strict (degenerate) complementarity pairs.
  Thus we have the partitioning
  $\set{1, \dots, s_t} = \setUt_+ \cup \setVt_+ \cup \setDt$.
\end{definition}

In the following we define constraint qualifications for the counterpart MPCC.
The standard definitions say that MPCC-LICQ and MPCC-MFCQ are LICQ and MFCQ,
respectively, for the so-called \emph{tightened NLP} (see \cite{Scheel_Scholtes_2000})
with associated Jacobian
\begin{align*}
  J(t, \ut, \vt) =
  \def\arraystretch{1.3}
  \begin{bmatrix}
    \pd_1 \cE &
    \pd_2 \cE \PUt+^T & \pd_2 \cE \PUt0^T &
    \pd_2 \cE \PVt+^T & \pd_2 \cE \PVt0^T \\
    \pd_1 \cA &
    \pd_2 \cA \PUt+^T & \pd_2 \cA \PUt0^T &
    \pd_2 \cA \PVt+^T & \pd_2 \cA \PVt0^T \\
    \pd_1 \cZ &
    [\pd_2 \cZ - I] \PUt+^T & [\pd_2 \cZ - I] \PUt0^T &
    [\pd_2 \cZ + I] \PVt+^T & [\pd_2 \cZ + I] \PVt0^T \\
    0 & 0 & I & 0 & 0 \\
    0 & 0 & 0 & 0 & I
  \end{bmatrix},
\end{align*}
where $P_\setS \in \R^{\card\setS \x s_t}$ denotes the projector
onto the subspace defined by $\setS \subseteq \set{1, \dots, s_t}$
and all partial darivatives are evaluated at $(t, \ut + \vt)$.
This Jacobian will be needed in \cref{sec:soc}
to formulate second order conditions.
Here we exploit the two unit blocks
to state constraint qualifications in a more compact form as in \cite{Hegerhorst_et_al:2019:MPEC1}.

\begin{definition}[MPCC-LICQ for \eqref{eq:i-mpcc}, see \cite{Scheel_Scholtes_2000}]\label{def:mpcc-licq}
\ifcase1
  Let $\cE \in \Cone(\Domtzt,\R^{m_1})$, $\cI\in \Cone(\Domtzt,\R^{m_2})$ and
  $\cZ \in \Cone(\Domtzt,\R^{s_t})$. Consider a feasible point $(t,\ut,\vt)$ of \eqref{eq:i-mpcc}.
  We say that the \emph{MPCC-LICQ}
  holds for \eqref{eq:i-mpcc} at $(t,\ut,\vt)$ if
\else
  We say that the \emph{MPCC-LICQ} holds for \eqref{eq:i-mpcc}
  at a feasible point $(t,\ut,\vt)$ if
\fi
   \begin{align*}
   \Jmpcc(t,\ut,\vt) &=
   \def\arraystretch{1.3}
   \begin{bmatrix}
     \pd_1 \cE & \pd_2 \cE \PUt+^T & \pd_2 \cE \PVt+^T \\
     \pd_1 \cA & \pd_2 \cA \PUt+^T & \pd_2 \cA \PVt+^T \\
     \pd_1 \cZ & [\pd_2 \cZ - I] \PUt+^T & [\pd_2 \cZ + I] \PVt+^T
   \end{bmatrix}
   \\
   &\in\R^{(m_1+\card{\setA}+s_t)\x(n_t+\card{\setUt_+}+\card{\setVt_+})}
   \end{align*}
   has full row rank $m_1+\card{\setA}+s_t$.
   Here all partial derivatives are evaluated at $(t, \ut + \vt)$.
\end{definition}

\begin{definition}[MPCC-MFCQ for \eqref{eq:i-mpcc}, see \cite{Scheel_Scholtes_2000}]\label{def:mpcc-mfcq}
\ifcase1
  Let $\cE \in \Cone(\Domtzt,\R^{m_1})$, $\cI \in \Cone(\Domtzt,\R^{m_2})$ and
  $\cZ \in \Cone(\Domtzt,\R^{s_t})$. Consider a feasible point $(t,\ut,\vt)$ of \eqref{eq:i-mpcc}.
  We say that the \emph{MPCC-MFCQ}
  holds for \eqref{eq:i-mpcc} at $(t,\ut,\vt)$ if
\else
  We say that the \emph{MPCC-MFCQ} holds for \eqref{eq:i-mpcc}
  at a feasible point $(t,\ut,\vt)$ if
\fi
   \begin{align*}
     \def\arraystretch{1.3}
   \begin{bmatrix}
     \pd_1 \cE & \pd_2 \cE \PUt+^T & \pd_2 \cE \PVt+^T \\
     \pd_1 \cZ & [\pd_2 \cZ - I] \PUt+^T & [\pd_2 \cZ + I] \PVt+^T
   \end{bmatrix}
   \in \R^{(m_1+s_t)\x(n_t+\card{\setUt_+}+\card{\setVt_+})}
   \end{align*}
   has full row rank $m_1+s_t$ and if there exists a vector $d\in \R^{n_t+\card{\setUt_+}+\card{\setVt_+}}$ such that
   \begin{align*}
     \def\arraystretch{1.3}
     \begin{bmatrix}
     \pd_1 \cE & \pd_2 \cE \PUt+^T & \pd_2 \cE \PVt+^T \\
     \pd_1 \cZ & [\pd_2 \cZ - I] \PUt+^T & [\pd_2 \cZ + I] \PVt+^T
     \end{bmatrix}
     d&=0,\\
     \begin{bmatrix}
     \pd_1 \cA & \pd_2 \cA \PUt+^T & \pd_2 \cA \PVt+^T
     \end{bmatrix}
     d&>0.
   \end{align*}
   Again all partial derivatives are evaluated at $(t, \ut + \vt)$.
\end{definition}

As with LIKQ and IDKQ for \eqref{eq:i-anf}, MPCC-MFCQ is weaker then MPCC-LICQ for the counterpart MPCC of
\eqref{eq:i-anf}.
The latter fact is well known, and can also be seen easily by rewriting \cref{ex:idkqlikq} as the counterpart MPCC and checking the above
conditions.

\subsection{Counterpart MPCC for the Abs-Normal NLP with Inequality Slacks}

Using the same approach as in the preceding paragraph, we formulate the counterpart MPCC of \eqref{eq:e-anf}.

\begin{definition}[Counterpart MPCC of \eqref{eq:e-anf}]
  The \emph{counterpart MPCC} of the abs-normal NLP \eqref{eq:e-anf} reads:
   \begin{subequations}
   \begin{align*}
     \minst[t, w, \ut, \vt, \uw, \vw]{f(t)}
     & \cE(t, \ut + \vt) = 0, \\
     & \cI(t, \ut + \vt) - (\uw + \vw) = 0, \\
     & \cZ(t, \ut + \vt) - (\ut - \vt) = 0, \tag{E-MPCC} \label{eq:e-mpcc} \\
     & w - (\uw - \vw) = 0, \\
     & 0 \le \ut \compl \vt \ge 0,\\
     & 0 \le \uw \compl \vw \ge 0,
   \end{align*}
   \end{subequations}
   where $\ut, \vt \in \R^{s_t}$ and $\uw, \vw \in \R^{m_2}$.
\end{definition}
   The feasible set of \eqref{eq:e-mpcc} is a lifting of $\Fmpcc$:
   \begin{align*}
     \Fempcc &\define
     \begin{defarray}{(t, w, \ut, \vt, \uw, \vw)}
       \cE(t,\ut+\vt) = 0,\ \cI(t,\ut+\vt) = \uw+\vw, \\
       \cZ(t,\ut+\vt) = \ut-\vt,\ w = \uw-\vw, \\
       0 \le \ut \compl \vt \ge 0,\ 0 \le \uw \compl \vw \ge 0
     \end{defarray}
     \\ &\hphantom:=
     \begin{defarray}{(t, w, \ut, \vt, \uw, \vw)}
       (t, \ut, \vt) \in \Fmpcc,\ \cI(t,\ut+\vt) = \uw+\vw, \\
       w = \uw-\vw,\ 0 \le \uw \compl \vw \ge 0
     \end{defarray}
     .
   \end{align*}

Clearly, the homeomorphism between $\Fmpcc$ and $\Fabs$
extends to $\Fempcc$ and $\Feabs$.

\begin{lemma}
  \label{lem:hom-F-e}
  Given an abs-normal NLP \eqref{eq:e-anf}
  and its counterpart MPCC \eqref{eq:e-mpcc},
  we have a homeomorphism $\_\phi: \Fempcc \to \Feabs$ defined as
  \begin{align*}
    \_\phi(t, w, \ut, \vt, \uw, \vw) &= (t, w, \ut - \vt, \uw - \vw), \\
    \Inv{\_\phi}(t, w, \zt, \zw) &= (t, w, [\zt]^+, [\zt]^-, [\zw]^+, [\zw]^-).
  \end{align*}
\end{lemma}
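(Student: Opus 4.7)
The plan is to imitate the proof of \cref{lem:hom-F-i} with two additional components. The key observation is the pointwise identity: for any $z \in \R$, setting $u = [z]^+$ and $v = [z]^-$ yields $u, v \ge 0$ with $uv = 0$, $z = u-v$, and $\abs{z} = u+v$; conversely, any pair $0 \le u \compl v \ge 0$ satisfies $[u-v]^+ = u$ and $[u-v]^- = v$. Applied componentwise to $\zt \in \R^{s_t}$ and to $\zw \in \R^{m_2}$, this identity is the pointwise content of $\_\phi$ and $\Inv{\_\phi}$.

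First I would verify $\_\phi(\Fempcc) \subseteq \Feabs$. For a point of $\Fempcc$, setting $\zt \define \ut - \vt$ and $\zw \define \uw - \vw$ gives $\abs{\zt} = \ut + \vt$ and $\abs{\zw} = \uw + \vw$ by complementarity, so the four equality constraints of \eqref{eq:e-mpcc} translate termwise into those of \eqref{eq:e-anf}. Conversely, for $\Inv{\_\phi}(\Feabs) \subseteq \Fempcc$, given $(t,w,\zt,\zw) \in \Feabs$ the choice $\ut = [\zt]^+$, $\vt = [\zt]^-$, $\uw = [\zw]^+$, $\vw = [\zw]^-$ satisfies the nonnegativity and complementarity conditions by construction, and substituting $\ut+\vt=\abs{\zt}$, $\ut-\vt=\zt$ (and analogously for $\uw,\vw$) recovers the equalities of \eqref{eq:e-mpcc}.

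Second, the pointwise identity above immediately yields $\_\phi \circ \Inv{\_\phi} = \mathrm{id}$ on $\Feabs$ and $\Inv{\_\phi} \circ \_\phi = \mathrm{id}$ on $\Fempcc$. Continuity of both maps is obvious: $\_\phi$ is linear (hence smooth), and $\Inv{\_\phi}$ is continuous because the operators $[\cdot]^+$ and $[\cdot]^-$ are continuous.

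There is really no obstacle to overcome; the entire argument is a componentwise extension of \cref{lem:hom-F-i}, with $(\uw,\vw)$ handled identically to $(\ut,\vt)$. The slack block $w - \zw = 0$ of \eqref{eq:e-anf} is carried by the added equality $w - (\uw - \vw) = 0$ of \eqref{eq:e-mpcc} via the same $u-v$ substitution, so no new analytical ingredient enters. I therefore expect the proof to be as brief as the one-word justification of the preceding lemma.
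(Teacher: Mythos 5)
Your argument is correct and is exactly the componentwise $z \leftrightarrow ([z]^+,[z]^-)$ correspondence that the paper relies on; the paper's own proof of this lemma is literally the single word ``Obvious.'' Your write-up just makes that one-word justification explicit, so there is nothing to add.
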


\begin{proof}
  Obvious.
\end{proof}

\begin{lemma}
  \label{lem:e-mpcc_licq}
\ifcase1
  Let $\cE \in \Cone(\Domtzt,\R^{m_1})$, $\cI \in \Cone(\Domtzt,\R^{m_2})$ and
  $\cZ \in \Cone(\Domtzt,\R^{s_t})$. Then, MPCC-LICQ for \eqref{eq:e-mpcc}
  at $y = (t,w,\ut,\vt,\uw,\vw)$ is full row rank of
\else
  MPCC-LICQ for \eqref{eq:e-mpcc} at a feasible point
  $y = (t,w,\ut,\vt,\uw,\vw)$ is full row rank of
\fi
   \begin{align*}
   \Jempcc(y)&=
   \def\arraystretch{1.3}
   \begin{bmatrix}
     \pd_1 \cE & 0 & \pd_2 \cE \PUt+^T &
     \pd_2 \cE \PVt+^T & 0 & 0 \\
     \pd_1 \cI & 0 & \pd_2 \cI \PUt+^T
     & \pd_2 \cI \PVt+^T & -P_{\setU^w_+}^T & -P_{\setV^w_+}^T \\
     \pd_1 \cZ & 0 & [\pd_2 \cZ - I] \PUt+^T
     & [\pd_2 \cZ + I]\PVt+^T & 0 & 0 \\
     0 & I & 0 & 0 & -P_{\setU^w_+}^T & +P_{\setV^w_+}^T
   \end{bmatrix}
   \\
   &\in\R^{(m_1+m_2+s_t+m_2)\x(n_t+m_2+\card{\setUt_+}+\card{\setVt_+}+\card{\setU^w_+}+\card{\setV^w_+})},
   \end{align*}
   where all partial derivatives are evaluated at $(t, \ut + \vt)$.
\end{lemma}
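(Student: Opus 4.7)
The plan is to mirror the proof of \cref{lem:e-likq} by first rewriting \eqref{eq:e-mpcc} in a compact form and then applying \cref{def:mpcc-licq} directly. Setting $x=(t,w)$, $z=(\zt,\zw)$, $u=(\ut,\uw)$, $v=(\vt,\vw)$, $\_f(x)=f(t)$, $\bcE(x,\abs z)=(\cE(t,\abs\zt),\,\cI(t,\abs\zt)-\abs\zw)$, and $\bcZ(x,\abs z)=(\cZ(t,\abs\zt),\,w)$, the problem \eqref{eq:e-mpcc} takes the compact counterpart-MPCC shape
\begin{align*}
  \min_{x,u,v} \ \_f(x) \qtext{s.t.}
  \bcE(x,u+v)=0,\quad \bcZ(x,u+v)-(u-v)=0,\quad 0 \le u \compl v \ge 0,
\end{align*}
without any inequality block $\cA$. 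Thus \cref{def:mpcc-licq} reduces to requiring full row rank of
\begin{align*}
  \def\arraystretch{1.3}
  \begin{bmatrix}
    \pd_1 \bcE & \pd_2 \bcE \PU^T & \pd_2 \bcE \PV^T \\
    \pd_1 \bcZ & [\pd_2 \bcZ - I]\PU^T & [\pd_2 \bcZ + I]\PV^T
  \end{bmatrix},
\end{align*}
evaluated at $(x,u+v)$, where $\setU_+=\setUt_+\cup\setU^w_+$ and $\setV_+=\setVt_+\cup\setV^w_+$ index the inactive complementarity components of $z=(\zt,\zw)$.

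Next I would unpack the block structure induced by the special form of $\bcE$ and $\bcZ$. A direct computation gives
\begin{align*}
  \pd_1 \bcE &= \begin{bmatrix} \pd_1 \cE & 0 \\ \pd_1 \cI & 0 \end{bmatrix}, &
  \pd_2 \bcE &= \begin{bmatrix} \pd_2 \cE & 0 \\ \pd_2 \cI & -I \end{bmatrix}, &
  \pd_1 \bcZ &= \begin{bmatrix} \pd_1 \cZ & 0 \\ 0 & I \end{bmatrix}, &
  \pd_2 \bcZ &= \begin{bmatrix} \pd_2 \cZ & 0 \\ 0 & 0 \end{bmatrix},
\end{align*}
and the projectors decouple as $\PU^T = \diag(\PUt+^T, P_{\setU^w_+}^T)$, $\PV^T = \diag(\PVt+^T, P_{\setV^w_+}^T)$.

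Substituting these blocks and reordering the columns to the ordering $(t,w,\ut|_{\setUt_+},\vt|_{\setVt_+},\uw|_{\setU^w_+},\vw|_{\setV^w_+})$ produces precisely the matrix $\Jempcc(y)$ stated in the lemma; note that the $-I$ in $\pd_2 \bcE$ yields the $-P_{\setU^w_+}^T$ and $-P_{\setV^w_+}^T$ blocks in the $\cI$-row, and the $\pd_2 \bcZ - I$ and $\pd_2 \bcZ + I$ terms give the $-P_{\setU^w_+}^T$ and $+P_{\setV^w_+}^T$ blocks in the last row (the $\pd_2 \bcZ$ contribution vanishes in the $w$-component). Since a column permutation preserves rank, full row rank of the displayed $\Jempcc(y)$ is equivalent to MPCC-LICQ for \eqref{eq:e-mpcc} in the sense of \cref{def:mpcc-licq}.

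There is no real obstacle here; the work is purely bookkeeping of block structures. The one spot to be careful about is matching the derivatives' evaluation point: in the MPCC setting $\abs{\zt}$ is replaced by $\ut+\vt$, so all partial derivatives of $\cE,\cI,\cZ$ are automatically evaluated at $(t,\ut+\vt)$, exactly as stated.
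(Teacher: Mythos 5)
Your proposal is correct and follows essentially the same route as the paper: rewrite \eqref{eq:e-mpcc} in the compact form \eqref{eq:be-mpcc} via $\bcE$, $\bcZ$, apply \cref{def:mpcc-licq} (with the $\cA$-block absent), and account for the column reordering from $(u,v)=(\ut,\uw,\vt,\vw)$ to $(\ut,\vt,\uw,\vw)$. The paper's proof merely states that the Jacobian is computed "using the special structure," whereas you spell out the block derivatives explicitly; the content is identical.
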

\begin{proof}
We set $x=(t,w)$, $u=(\ut,\uw)$, $v=(\vt,\vw)$ as well as $\_f(x)=f(t)$,
\begin{equation*}
  \bcE(x,u+v)=
  \begin{pmatrix}\cE(t,\ut+\vt) \\ \cI(t,\ut+\vt)-(\uw+\vw))\end{pmatrix}
  ,\quad
  \bcZ(x,u+v) = \begin{pmatrix}\cZ(t,\ut+\vt) \\ w\end{pmatrix}
  .
\end{equation*}
Then, \eqref{eq:e-mpcc} becomes
\begin{align*}
  \minst[x, u, v]{\_f(x)}
  & \bcE(x, u + v) = 0, \\
  & \bcZ(x, u + v) - (u - v) = 0,
  \tag{$\overline{\text{E-MPCC}}$} \label{eq:be-mpcc} \\
  & 0 \le u \compl v \ge 0,
\end{align*}
and we can compute the Jacobian from \cref{def:mpcc-licq} using the special
structure of \eqref{eq:e-mpcc}.
The resulting matrix has the stated form, except that the last four columns
belong to variables $(\ut,\vt,\uw,\vw)$ rather than $(u,v) = (\ut,\uw,\vt,\vw)$.
\end{proof}

Like LIKQ for \eqref{eq:e-anf}, MPCC-LICQ for \eqref{eq:e-mpcc}
does not depend on the particular choice of~$w$,
and like IDKQ for \eqref{eq:e-anf},
the concept of MPCC-MFCQ for \eqref{eq:e-mpcc}
is equivalent to MPCC-LICQ since no inequalities
are present besides the complementarities.

\subsection{Relations of MPCC Constraint Qualifications}

In this paragraph we state the relations of constraint qualifications for the two different formulations
introduced in the previous paragraphs.
They follow from the results in the previous section and
in the two \emph{following} sections.
For an illustration see \cref{fig:relations} below.
We set $W(t, \ut, \vt) \define \defset{(w, \uw, \vw)}
{\abs{w} = \cI(t, \ut + \vt), \uw = [w]^+, \vw = [w]^-}$.

\begin{theorem}\label{th:licq}
  MPCC-LICQ for \eqref{eq:i-mpcc} holds at $(t,\ut,\vt)\in\Fmpcc$
  if and only if MPCC-LICQ for \eqref{eq:e-mpcc} holds
  at $(t,w,\ut,\uw,\vt,\vw)\in\Fempcc$ for any (and hence all) $(w, \uw, \vw) \in W(t, \ut, \vt)$.
\end{theorem}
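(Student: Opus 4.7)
The plan is to mirror the proof of \cref{th:likq} and reduce the statement to a direct comparison of the Jacobians $\Jmpcc$ from \cref{def:mpcc-licq} and $\Jempcc$ from \cref{lem:e-mpcc_licq}, using elementary block reductions that preserve row rank. From $\abs{w} = \cI(t, \ut+\vt)$ on $\Fempcc$ one reads off the index-set identity
\begin{equation*}
  \setU^w_+ \cup \setV^w_+ = \{1, \dots, m_2\} \setminus \setA(t) \quad \text{(disjoint union),}
\end{equation*}
whose subdivision depends on $\sign(w_i)$ but whose total cardinality is $m_2 - \card{\setA}$ for every $w \in W(t, \ut, \vt)$. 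This already foreshadows why the ``any (and hence all)'' aspect of the claim will follow for free.

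The reduction of $\Jempcc$ proceeds in two steps. First, the $w$-column is supported only in the bottom row block $[0\;I\;0\;M_{44}]$ of $m_2$ rows, whose identity pivot decouples them from everything above; hence $\Jempcc$ has full row rank iff the submatrix $A'$ formed by the $\cE$, $\cI$, and $\cZ$ row blocks with the (zero) $w$-column removed does. Second, within $A'$ the $(\uw, \vw)$-column block $[-P_{\setU^w_+}^T,\; -P_{\setV^w_+}^T]$ is nonzero only in the $\cI$ rows and, among those, only for $i \in \setA^c$, on which it restricts to an $(m_2 - \card{\setA}) \times (m_2 - \card{\setA})$ signed permutation matrix, hence invertible. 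Applying the Schur-complement rank identity with this invertible restriction as pivot block yields
\begin{equation*}
  \rank(A') = (m_2 - \card{\setA}) + \rank(S),
\end{equation*}
where $S$ consists of the $\cE$, $\cI|_\setA$, and $\cZ$ rows restricted to the $(t, \ut, \vt)$ columns, the Schur correction vanishing because all non-pivot rows have zero $(\uw, \vw)$ entries.

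Inspecting entries block by block shows $S = \Jmpcc$ verbatim. Since $A'$ has $m_1 + m_2 + s_t$ rows and $\Jmpcc$ has $m_1 + \card{\setA} + s_t$ rows, full row rank of $A'$ is equivalent to full row rank of $\Jmpcc$, which proves the equivalence. The only delicate step to check carefully is that $S$ really is $\Jmpcc$ with no alterations; this hinges on the vanishing of $(\uw, \vw)$ entries outside the $\cI|_{\setA^c}$ rows, so that the Schur correction is identically zero. The same structural observation makes transparent why only the positions, and not the signs, of the $w_i$ enter the rank computation, giving the independence from the particular choice of $w \in W(t, \ut, \vt)$.
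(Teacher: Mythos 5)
Your argument is correct, but it follows a genuinely different route from the paper's. The paper proves \cref{th:licq} in one line by composing three equivalences established elsewhere: MPCC-LICQ for \eqref{eq:i-mpcc} is equivalent to LIKQ for \eqref{eq:i-anf} (\cref{th:likq-licq-i}), which is equivalent to LIKQ for \eqref{eq:e-anf} (\cref{th:likq}), which in turn is equivalent to MPCC-LICQ for \eqref{eq:e-mpcc} (\cref{th:likq-licq-e}); in \cref{fig:relations} the theorem is the fourth side of a square deduced from the other three. You instead prove that fourth side directly, staying entirely on the MPCC side: you eliminate the $m_2$ slack-definition rows of $\Jempcc$ through the identity pivot in the $w$-column, identify $\setU^w_+ \cup \setV^w_+$ as a disjoint partition of $\setA^c$ so that the $(\uw,\vw)$-columns restrict to an invertible signed permutation on the inactive $\cI$-rows, and conclude $\rank \Jempcc = 2m_2 - \card{\setA} + \rank \Jmpcc$, from which the equivalence of the two full-row-rank conditions follows by counting rows. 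The two delicate points --- that the Schur correction vanishes because the $(\uw,\vw)$-entries are zero outside the $\cI|_{\setA^c}$ rows, and that the remaining block $S$ is literally $\Jmpcc$ since $\cA = \cI|_{\setA}$ --- both hold, and your argument also correctly accounts for the independence from the choice of $w \in W(t,\ut,\vt)$, since only the positions and not the signs of the nonzero $w_i$ enter the rank computation. Your route is the exact analogue of the paper's proof of \cref{th:likq} transplanted from the abs-normal Jacobians to the MPCC Jacobians of \cref{def:mpcc-licq} and \cref{lem:e-mpcc_licq}; it makes \cref{th:licq} self-contained and available immediately after \cref{lem:e-mpcc_licq}, at the cost of repeating a block-elimination computation that the paper's ordering of results lets it avoid.
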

\begin{proof}
  This follows directly from \cref{th:likq},
  \cref{th:likq-licq-i} and \cref{th:likq-licq-e}.
\end{proof}

\begin{theorem}\label{th:mfcq}
  MPCC-MFCQ for \eqref{eq:i-mpcc} holds at $(t,\ut,\vt)\in\Fmpcc$ if
  MPCC-MFCQ for \eqref{eq:e-mpcc} holds at $(t,w,\ut,\uw,\vt,\vw)\in\Fempcc$
  for any (and hence all) $(w, \uw, \vw) \in W(t, \ut, \vt)$.
  The converse is not true.
\end{theorem}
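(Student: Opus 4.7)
The plan is to follow the template of the proof of Theorem~\ref{th:idkq}, replacing kink qualifications on the abs-normal side with their MPCC counterparts. The key observation, made right after Lemma~\ref{lem:e-mpcc_licq}, is that \eqref{eq:e-mpcc} contains no inequality constraints apart from the complementarity pairs, so MPCC-MFCQ and MPCC-LICQ coincide at every feasible point of \eqref{eq:e-mpcc}. Hence the hypothesis ``MPCC-MFCQ for \eqref{eq:e-mpcc} at the lifted point'' is equivalent to ``MPCC-LICQ for \eqref{eq:e-mpcc} at the lifted point''.

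By Theorem~\ref{th:licq}, the latter is in turn equivalent to MPCC-LICQ for \eqref{eq:i-mpcc} at $(t,\ut,\vt)$, for any (and hence every) $(w,\uw,\vw)\in W(t,\ut,\vt)$. Since MPCC-LICQ trivially implies MPCC-MFCQ for \eqref{eq:i-mpcc} --- full row rank of $\Jmpcc(t,\ut,\vt)$ yields the required rank of its equality/switching subblock, and a direction $d$ satisfying $[\pd_1\cA\ \pd_2\cA\PUt+^T\ \pd_2\cA\PVt+^T]d>0$ is obtained by solving the full-rank linear system with arbitrary strictly positive right-hand sides on the active-inequality rows --- the forward implication follows.

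For the strictness of the implication, I would transport Example~\ref{ex:idkqlikq} into its \eqref{eq:i-mpcc} counterpart via the substitution $\ut=[\zt]^+$, $\vt=[\zt]^-$. Because IDKQ holds but LIKQ fails in that example, the forthcoming IDKQ/MPCC-MFCQ and LIKQ/MPCC-LICQ correspondences of Section~\ref{sec:regularity} (in particular \cref{th:likq-licq-i}) imply that MPCC-MFCQ for \eqref{eq:i-mpcc} holds while MPCC-LICQ for \eqref{eq:i-mpcc} fails at the transformed point. Were MPCC-MFCQ for \eqref{eq:e-mpcc} to hold at any associated lifted point, the chain of equivalences from the first two paragraphs would force MPCC-LICQ for \eqref{eq:i-mpcc}, contradicting what we just established. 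The main obstacle is mostly bookkeeping: one needs to verify that the sign ambiguity in $w$ does not affect the relevant Jacobian ranks, which follows from the MPCC analogue of Remark~\ref{rem:e-likq} applied to $\Jempcc$.
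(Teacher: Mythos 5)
Your proof is correct, but it chains different results than the paper does. The paper's one-line proof routes through the abs-normal side: \cref{co:idkq-mfcq-e} (MPCC-MFCQ for \eqref{eq:e-mpcc} is equivalent to IDKQ for \eqref{eq:e-anf}), then \cref{th:idkq} (IDKQ passes from \eqref{eq:e-anf} down to \eqref{eq:i-anf} but not conversely), then \cref{th:idkq-mfcq-i} (IDKQ for \eqref{eq:i-anf} is equivalent to MPCC-MFCQ for \eqref{eq:i-mpcc}). You stay entirely on the MPCC side: MPCC-MFCQ coincides with MPCC-LICQ for \eqref{eq:e-mpcc} because no inequalities remain besides the complementarities, \cref{th:licq} transfers MPCC-LICQ to \eqref{eq:i-mpcc}, and the standard full-row-rank argument (solve the system with positive right-hand side on the active-inequality rows) yields MPCC-MFCQ there. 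This is precisely the MPCC mirror image of how the paper proves \cref{th:idkq} itself, so the two arguments share a skeleton; yours avoids the cross-equivalences \cref{th:idkq-mfcq-i} and \cref{co:idkq-mfcq-e} in the forward direction, at the cost of leaning on \cref{th:licq}, which is itself a forward reference to \cref{sec:regularity} --- but the paper's own proof also cites forward results, so neither route is circular. For the failure of the converse, you transport \cref{ex:idkqlikq} through \cref{th:likq-licq-i} and \cref{th:idkq-mfcq-i}, whereas the paper extracts it from the converse clause of \cref{th:idkq}; the paper also notes after \cref{def:mpcc-mfcq} that one can check the MPCC conditions directly on the transported example, which would make your counterexample self-contained. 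Your final concern about the sign ambiguity in $w$ is already settled by the paper's remark after \cref{lem:e-mpcc_licq} that MPCC-LICQ for \eqref{eq:e-mpcc} does not depend on the particular choice of $w$.
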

\begin{proof}
  This follows directly from \cref{th:idkq},
  \cref{th:idkq-mfcq-i} and \cref{co:idkq-mfcq-e}.
\end{proof}

\section{Kink Qualifications and MPCC Constraint Qualifications}\label{sec:regularity}

In this section we have a closer look at relations between abs-normal NLPs
and counterpart MPCCs in both formulations.

\subsection{Relations of General Abs-Normal NLP and MPCC}

Here we use the variables $x$ and $z$ instead of $t$ and $\zt$
to shorten notation because we do not consider inequality slacks.
Then the general abs-normal NLP \eqref{eq:i-anf} becomes:
\begin{align*}
  \minst[x, z]{f(x)}
  & \cE(x, \abs{z}) = 0, \\
  & \cI(x, \abs{z}) \ge 0, \\
  & \cZ(x, \abs{z}) - z = 0.
\end{align*}
The counterpart MPCC \eqref{eq:i-mpcc} reads:
\begin{align*}
  \minst[x, u, v]{f(x)}
  & \cE(x, u + v) = 0, \\
  & \cI(x, u + v) \ge 0, \\
  & \cZ(x, u + v) - (u - v) = 0, \\
  & 0 \le u \compl v \ge 0.
\end{align*}
We obtain the following relations of kink qualifications
and MPCC constraint qualifications.

\ifcase1
\begin{theorem}[Equivalence of LIKQ and MPCC-LICQ]\label{th:likq-licq-i}
  LIKQ for \eqref{eq:i-anf} holds at $x\in\Fabs$ if and only if
  MPCC-LICQ for \eqref{eq:i-mpcc} holds at
\or
\begin{theorem}[LIKQ $\iff$ MPCC-LICQ]\label{th:likq-licq-i}
  LIKQ for \eqref{eq:i-anf} at $x\in\Fabs$ is equivalent to
  MPCC-LICQ for \eqref{eq:i-mpcc} at
\fi
\ifcase1
   $(x,u,v)\in\Domx\x\R^s\x\R^s$ with $u=[z(x)]^+$ and $v=[-z(x)]^-$.
\else
   $(x,u,v)=(x,[z(x)]^+,[z(x)]^-) \in \Fmpcc$.
\fi
  \end{theorem}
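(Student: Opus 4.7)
The plan is to establish a linear isomorphism between the kernels $\ker J_{\text{abs}}(x)$ and $\ker J_{\text{mpcc}}(x,u,v)$, and then use rank--nullity to translate this into the desired full-rank equivalence. The key observation is the correspondence of index sets at the specific feasible point: since $u=[z(x)]^+$ and $v=[z(x)]^-$, we have $\setU^t_+=\{i:\sigma_i=+1\}$, $\setV^t_+=\{i:\sigma_i=-1\}$, and $\setD^t=\{i:\sigma_i=0\}=\alpha$. Consequently, for a vector $d_z\in\R^{s_t}$ supported on $\setU^t_+\cup\setV^t_+$, the supports $P_{\setU^t_+}^T d_u$ and $P_{\setV^t_+}^T d_v$ of the two MPCC directional pieces are disjoint, and one has $P_{\setU^t_+}^T d_u - P_{\setV^t_+}^T d_v = d_z$ together with $P_{\setU^t_+}^T d_u + P_{\setV^t_+}^T d_v = \Sigma d_z$.

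Given $d_x\in\ker J_{\text{abs}}(x)$, I would define $d_z\define\pd_t z(x)\,d_x=[I-\pd_2\cZ\,\Sigma]^{-1}\pd_1\cZ\,d_x$, set $d_u\define [d_z]_{\setU^t_+}$ and $d_v\define -[d_z]_{\setV^t_+}$, and verify that $(d_x,d_u,d_v)\in\ker J_{\text{mpcc}}$. The switching-block row reduces to
\begin{equation*}
\pd_1\cZ\,d_x + \pd_2\cZ\,\Sigma\,d_z - d_z = \bigl(\pd_1\cZ - [I-\pd_2\cZ\,\Sigma]\pd_t z(x)\bigr)d_x = 0,
\end{equation*}
while the equality- and active-inequality-block rows collapse via $\pd_2\cE\,\Sigma\,d_z$ and $\pd_2\cA\,\Sigma\,d_z$ exactly to $J_\setE d_x=0$ and $J_\setA d_x=0$; note that $J_\alpha d_x=0$ is what forces $d_z$ to vanish on $\alpha$, so the support condition is preserved. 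Conversely, for $(d_x,d_u,d_v)\in\ker J_{\text{mpcc}}$, the switching-block equation inverts to yield $d_z=\pd_t z(x)\,d_x=P_{\setU^t_+}^T d_u - P_{\setV^t_+}^T d_v$, which simultaneously determines $d_u,d_v$ uniquely from $d_x$ (by disjoint supports) and forces $e_i^T\pd_t z(x)\,d_x=0$ for $i\in\alpha$; the remaining two block rows then give $J_\setE d_x=0$ and $J_\setA d_x=0$. So the maps are mutually inverse linear bijections.

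Applying rank--nullity on both sides gives
\begin{equation*}
\rank J_{\text{mpcc}} - \rank J_{\text{abs}} = (\card{\setU^t_+}+\card{\setV^t_+}) - 0 = s_t - \calp,
\end{equation*}
which is precisely the difference $(m_1+\card\setA+s_t)-(m_1+\card\setA+\calp)$ between the row counts of the two matrices. Hence $J_{\text{mpcc}}$ attains full row rank $m_1+\card\setA+s_t$ if and only if $J_{\text{abs}}$ attains full row rank $m_1+\card\setA+\calp$, which is the claimed equivalence. I expect no serious obstacle here; the mild bookkeeping hurdle is the clean handling of the projectors $P_{\setU^t_+}^T$ and $P_{\setV^t_+}^T$ and their interaction with $\Sigma$, which is what the identities $P_{\setU^t_+}^T d_u \pm P_{\setV^t_+}^T d_v \in\{\Sigma d_z,\,d_z\}$ are designed to streamline.
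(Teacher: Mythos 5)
Your argument is correct, and it reaches the paper's conclusion by a dual route. The paper implements exactly your change of variables---combining $d_u$ and $-d_v$ into one vector supported on $\setUV$, using the identities $\PU^T d_u + \PV^T d_v = \Sigma d_z$ and $\PU^T d_u - \PV^T d_v = d_z$ together with the invertibility of $I - \pd_2\cZ\,\Sigma$---but directly as rank-preserving column and row operations on $\Jmpcc$, reducing it to a block form in which $\Jabs$ (padded with the rows $e_i^T\pd_x z$, $i\notin\alpha$) sits alongside the full-column-rank block $\PUV^T$; the identity $\rank\Jmpcc = \rank\Jabs + \card{\setUV}$ is then read off and compared with the row counts. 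You instead package the same operations as a mutually inverse pair of linear maps between $\ker\Jabs$ and $\ker\Jmpcc$ and recover the rank identity by rank--nullity. The two presentations are logically interchangeable: the matrix reduction yields an explicit reduced matrix that the paper essentially reuses later (it reappears as \eqref{eq:U1} in the second-order analysis), while your kernel bijection makes the correspondence of tangent directions explicit, which is the form of reasoning the paper itself switches to for the MFCQ analogue in \cref{th:idkq-mfcq-i}. Your bookkeeping is right at the two delicate points: that $\Jalp d_x = 0$ is what confines $d_z=\pd_t z(x)d_x$ to $\setUV$ in the forward direction, and that the support of $\PU^T d_u - \PV^T d_v$ forces $\Jalp d_x = 0$ in the converse; and the dimension count $(n+\card{\setUV})-n = s-\calp = (m_1+\card\setA+s)-(m_1+\card\setA+\calp)$ correctly converts equal nullities into the equivalence of the two full-row-rank conditions.
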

  \begin{proof}
    Setting $y \define (x, u + v)$ and  $r \define m_1 + \card\setA + s$,
    MPCC-LICQ for the counterpart MPCC is
   \begin{align*}
     \rank
     \def\arraystretch{1.3}
   \begin{bmatrix}
     \pd_1 \cE(y) & \pd_2 \cE(y) \PU^T & \pd_2 \cE(y) \PV^T \\
     \pd_1 \cA(y) & \pd_2 \cA(y) \PU^T & \pd_2 \cA(y) \PV^T \\
     \pd_1 \cZ(y) & [\pd_2 \cZ(y) - I] \PU^T & [\pd_2 \cZ(y) + I] \PV^T
   \end{bmatrix} = r.
   \end{align*}
   By negating the second column and combining it with the third column,
   this is equivalent to
   \begin{align*}
     \rank
     \def\arraystretch{1.3}
   \begin{bmatrix}
     \pd_1 \cE(y) & -\pd_2 \cE(y) \Sigma \PUV^T \\
     \pd_1 \cA(y) & -\pd_2 \cA(y) \Sigma \PUV^T \\
     \pd_1 \cZ(y) & [I - \pd_2 \cZ(y) \Sigma] \PUV^T
   \end{bmatrix} = r
   \end{align*}
   and, by non-singularity of $I - \pd_2 \cZ(y) \Sigma$, to
   \begin{align*}
     \rank
     \def\arraystretch{1.3}
   \begin{bmatrix}
     \pd_1 \cE(y) & -\pd_2 \cE(y) \Sigma \PUV^T \\
     \pd_1 \cA(y) & -\pd_2 \cA(y) \Sigma \PUV^T \\
     [I - \pd_2 \cZ(y) \Sigma]^{-1} \pd_1 \cZ(y) & \PUV^T
   \end{bmatrix} = r.
   \end{align*}
   Next, we use the third row to eliminate the entries above
   $\PUV^T$ to obtain
   \begin{align*}
     \rank
   \begin{bmatrix}
     \pd_1 \cE(y) +
     \pd_2 \cE(y) \Sigma [I - \pd_2 \cZ(y) \Sigma]^{-1} \pd_1 \cZ(y) & 0 \\
     \pd_1 \cA(y) +
     \pd_2 \cA(y) \Sigma [I - \pd_2 \cZ(y) \Sigma]^{-1} \pd_1 \cZ(y) & 0 \\
     [I - \pd_2 \cZ(y) \Sigma]^{-1} \pd_1 \cZ(y) & \PUV^T
   \end{bmatrix}
   = r,
 \end{align*}
 which we can write with $u + v = \abs{z} = \abs{z(x)}$ as
 \begin{align*}
   \rank
   \begin{bmatrix}
     \pd_x \cE(x, \abs{z(x)}) & 0 \\
     \pd_x \cA(x, \abs{z(x)}) & 0 \\
     \pd_x z(x) & \PUV^T
   \end{bmatrix} = r.
 \end{align*}
 Finally, since $\alpha = \setD$ is the complement of $\setUV$,
 this is equivalent to
 \begin{align*}
   \rank
   \begin{bmatrix}
     \pd_x \cE(x, \abs{z(x)}) \\
     \pd_x \cA(x, \abs{z(x)}) \\
     [e_i^T \pd_x z(x)]_{i \in \alpha}
   \end{bmatrix}
   = m_1 + \card\setA + \calp,
 \end{align*}
 which is LIKQ for the abs-normal NLP.
\end{proof}

\ifcase1
\begin{theorem}[Equivalence of IDKQ and MPCC-MFCQ]\label{th:idkq-mfcq-i}
  IDKQ for \eqref{eq:i-anf} holds at $x\in\Fabs$ if and only if
  MPCC-MFCQ for \eqref{eq:i-mpcc} holds at
\or
\begin{theorem}[IDKQ $\iff$ MPCC-MFCQ]\label{th:idkq-mfcq-i}
  IDKQ for \eqref{eq:i-anf} at $x\in\Fabs$ is equiv\-a\-lent to
  MPCC-MFCQ for \eqref{eq:i-mpcc} at
\fi
\ifcase1
   $(x,u,v)\in\Domx\x\R^s\x\R^s$ with $u=[z(x)]^+$ and $v=[z(x)]^-$.
\else
   $(x,u,v)=(x,[z(x)]^+,[z(x)]^-) \in \Fmpcc$.
\fi
  \end{theorem}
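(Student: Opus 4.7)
The proof will closely follow the pattern of \cref{th:likq-licq-i}, applying the same column and row transformations now to both the equality block (with $\cE$- and $\cZ$-rows) and the active-inequality block (with $\cA$-rows) of the MPCC-MFCQ Jacobian at $y \define (x, u + v) = (x, \abs{z(x)})$. First, negating the $\setU_+$ columns and combining them with the $\setV_+$ columns, exactly as in the LIKQ proof, turns the equality block into
\begin{equation*}
\def\arraystretch{1.3}
\begin{bmatrix}
\pd_1 \cE(y) & -\pd_2 \cE(y) \Sigma \PUV^T \\
\pd_1 \cZ(y) & [I - \pd_2 \cZ(y) \Sigma] \PUV^T
\end{bmatrix}
\end{equation*}
and the active-inequality row into $\begin{bmatrix} \pd_1 \cA(y) & -\pd_2 \cA(y) \Sigma \PUV^T \end{bmatrix}$, under the bijective variable substitution $(d_u, d_v) \leftrightarrow d_{uv} \in \R^{\card{\setUV}}$ defined component-wise by $d_{uv,i} = -d_{u,i}$ for $i \in \setU_+$ and $d_{uv,i} = d_{v,i}$ for $i \in \setV_+$.

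Next I premultiply the $\cZ$-rows by the nonsingular matrix $[I - \pd_2 \cZ(y) \Sigma]^{-1}$, which turns them into $\begin{bmatrix} \pd_x z(x) & \PUV^T \end{bmatrix}$, and then use these rows to clear the $\PUV^T$-columns from the $\cE$-rows. These are row operations confined to the equality block, so the $\cA$-row is untouched. For the rank condition the argument is identical to that in \cref{th:likq-licq-i} and yields equivalence with the IDKQ rank condition $\rank \begin{bmatrix} \JE^T & \Jalp^T \end{bmatrix}^T = m_1 + \calp$. For the direction condition, the transformed $\cZ$-equation $\pd_x z(x) d_x + \PUV^T d_{uv} = 0$ is solvable in $d_{uv}$ if and only if $e_i^T \pd_x z(x) d_x = 0$ for every $i \in \alpha$ (since $\PUV^T d_{uv}$ vanishes on $\alpha$), i.e., if and only if $\Jalp d_x = 0$, in which case $d_{uv}$ is uniquely determined by $d_x$ via $\PUV^T d_{uv} = -\pd_x z(x) d_x$.

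Substituting this $d_{uv}$ into the transformed $\cE$- and $\cA$-expressions and using the chain-rule identities $\JE = \pd_1 \cE + \pd_2 \cE \Sigma \pd_x z$ and $\JA = \pd_1 \cA + \pd_2 \cA \Sigma \pd_x z$, the remaining conditions collapse to $\JE d_x = 0$ and $\JA d_x > 0$. Together with $\Jalp d_x = 0$ this is exactly the IDKQ direction condition. The only subtle point is to verify that the strict sign in $\JA d_x > 0$ is preserved throughout: since the $\cA$-block is acted on solely by the bijective column substitution and never by any row operation, no cancellation or sign flip can occur. Combining the rank equivalence with the direction equivalence yields the theorem in both directions.
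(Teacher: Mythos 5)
Your proposal is correct and follows essentially the same route as the paper's proof: reduce the rank condition by the column and row operations of \cref{th:likq-licq-i}, combine $(d_u,d_v)$ into a single vector $d_{uv}$ supported on $\setUV$, use the transformed $\cZ$-equation to force $\Jalp d_x = 0$ and to determine $d_{uv}$ from $d_x$, and substitute back to recover $\JE d_x = 0$ and $\JA d_x > 0$. The only differences are an immaterial sign convention for $d_{uv}$ and your (welcome, but implicit in the paper) remark that the strict inequality on the $\cA$-row survives because that row is touched only by the bijective column substitution.
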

  \begin{proof}
    Again with $y \define (x, u + v)$,
   MPCC-MFCQ for the counterpart MPCC is
   \begin{enumerate}
    \item full row rank of
    \begin{align*}
    \def\arraystretch{1.3}
    \begin{bmatrix}
      \pd_1 \cE(y) & \pd_2 \cE(y)\PU^T & \pd_2 \cE(y)\PV^T \\
      \pd_1 \cZ(y) & [\pd_2 \cZ(y)-I]\PU^T & [\pd_2 \cZ(y)+I]\PV^T
    \end{bmatrix}
    \in \R^{(m_1 + s) \x (n + \card{\setUV})}.
    \end{align*}
    As in the proof of \cref{th:likq-licq-i},
    this is seen to be full row rank of
    \begin{align*}
    \begin{bmatrix}
     \pd_x \cE(x,\abs{z(x)})  \\
     [e_i^T\pd_x z(x)]_{i\in \alpha}
   \end{bmatrix}
     \in \R^{(m_1 + \calp) \x n}.
   \end{align*}
    \item the existence of a vector
      $d = (\dx,\du,\dv) \in \R^{n + \card{\setUV}}$ such that
   \begin{align*}
   \def\arraystretch{1.3}
   \begin{bmatrix}
     \pd_1 \cE(y) & \pd_2 \cE(y)\PU^T & \pd_2 \cE(y)\PV^T \\
     \pd_1 \cZ(y) & [\pd_2 \cZ(y)-I]\PU^T & [\pd_2 \cZ(y)+I]\PV^T
   \end{bmatrix}d = 0, \\
   \begin{bmatrix}
     \pd_1 \cA(y) & \pd_2 \cA(y)\PU^T & \pd_2 \cA(y)\PV^T \\
   \end{bmatrix}d>0.
   \end{align*}
   \end{enumerate}
   We combine $d_u$ and $-d_v$ to $d_{uv} \in \R^{\card{\setUV}}$.
   Then this is equivalent to
   \begin{align*}
     \pd_1 \cE(y)d_x + \pd_2 \cE(y)\Sigma \PUV^Td_{uv}&=0, \\
     \pd_1 \cZ(y)d_x - [I-\pd_2 \cZ(y)\Sigma] \PUV^Td_{uv}&=0, \\
     \pd_1 \cA(y)d_x + \pd_2 \cA(y)\Sigma \PUV^Td_{uv}&>0.
   \end{align*}
   The second condition can be written as
   \begin{equation}
     \label{eq:dx-duv}
     [I-\pd_2 \cZ(y)\Sigma]^{-1}\pd_1 \cZ(y)d_x =  \PUV^Td_{uv}.
   \end{equation}
   Multiplying this by $P_\setD^T = P_\alpha^T$ yields
   \begin{equation*}
     \left[
       e_i^T [I - \pd_2 \cZ(y) \Sigma]^{-1} \pd_1 \cZ(y)
     \right]_{i \in \alpha} d_x =
     [e_i^T \pd_x z(x)]_{i\in\alpha} d_x = 0.
   \end{equation*}
   With $u + v = \abs{z} = \abs{z(x)}$,
   substituting the right-hand side of \eqref{eq:dx-duv}
   into the first and third condition finally gives
   \begin{align*}
     \pd_x \cE(x,\abs{z(x)})d_x &=0, \\
     [e_i^T \pd_x z(x)]_{i\in\alpha} d_x &=0, \\
     \pd_x \cA(x,\abs{z(x)})d_x &>0,
   \end{align*}
   which is IDKQ for the abs-normal NLP.
  \end{proof}

\subsection{Relations of Abs-Normal NLP and MPCC with Inequality Slacks}

As the reformulation with inequality slacks
is just a specialization of the general case,
we do without proofs and give remarks where differences occur.

Using the short notation \eqref{eq:be-nlp} for \eqref{eq:e-anf}
(see proof of \cref{lem:e-likq})
and similarly \eqref{eq:be-mpcc} for the counterpart MPCC \eqref{eq:e-mpcc}
(see proof of \cref{lem:e-mpcc_licq}),
we obtain the same relation between LIKQ and MPCC-LICQ
as in the previous paragraph.

\ifcase1
\begin{theorem}[Equivalence of LIKQ and MPCC-LICQ]\label{th:likq-licq-e}
  LIKQ for \eqref{eq:e-anf} holds at $x\in\Feabs$ if and only if
  MPCC-LICQ for \eqref{eq:e-mpcc} holds at
\or
\begin{theorem}[LIKQ $\iff$ MPCC-LICQ]\label{th:likq-licq-e}
  LIKQ for \eqref{eq:e-anf} at $x\in\Feabs$ is equiv\-a\-lent to
  MPCC-LICQ for \eqref{eq:e-mpcc} at
\fi
\ifcase1
   $(x,u,v)\in\Domx\x\R^{s_t+m_2}\x\R^{s_t+m_2}$ with $u=[z(x)]^+$ and $v=[z(x)]^-$.
\else
   $(x,u,v)=(x,[z(x)]^+,[z(x)]^-) \in \Fempcc$.
\fi
\end{theorem}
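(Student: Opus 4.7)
The plan is to reduce this statement to the already-established \cref{th:likq-licq-i}. Recall from the proofs of \cref{lem:e-likq} and \cref{lem:e-mpcc_licq} the aggregated notation $x=(t,w)$, $z=(\zt,\zw)$, $u=(\ut,\uw)$, $v=(\vt,\vw)$ together with $\bcE$ and $\bcZ$, under which \eqref{eq:e-anf} takes the compact form \eqref{eq:be-nlp} and \eqref{eq:e-mpcc} takes the compact form \eqref{eq:be-mpcc}. These compact problems have exactly the structure of \eqref{eq:i-anf} and \eqref{eq:i-mpcc}, except that no inequality constraints $\cI \ge 0$ are present; equivalently, the active inequality set of the compact problem is empty.

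First I would verify that $\overline{\text{E-NLP}}$ is genuinely in abs-normal form. Block-wise, $\pd_2 \bcZ$ has $\pd_2 \cZ$ in its top-left block and zeros elsewhere, hence is strictly lower triangular whenever $\pd_2 \cZ$ is. This ensures that $I - \pd_2 \bcZ \Sigma$ is invertible and that the local solution $z(x)=(\zt(t),\zw(w))$ together with its Jacobian are well defined, so all formulas underlying \cref{def:likq,def:mpcc-licq} remain valid in the compact setting.

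With this in place, \cref{th:likq-licq-i} applied to the pair $(\overline{\text{E-NLP}},\overline{\text{E-MPCC}})$ with empty $\setA$ yields equivalence of LIKQ for $\overline{\text{E-NLP}}$ at $(x,z(x))$ and MPCC-LICQ for $\overline{\text{E-MPCC}}$ at $(x,[z(x)]^+,[z(x)]^-)$. Note that the column-reduction argument in that proof (negating the $\setV_+$-columns and merging with $\setU_+$-columns, normalising by $I-\pd_2\bcZ\Sigma$, and eliminating above $\PUV^T$ using the $\bcZ$-rows) never uses the presence of the $\cA$-rows; those are simply carried along passively, so their removal has no effect on the equivalence.

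To translate back to the original formulations, \cref{lem:e-likq} identifies LIKQ for $\overline{\text{E-NLP}}$ with LIKQ for \eqref{eq:e-anf} by construction, and \cref{lem:e-mpcc_licq} analogously identifies MPCC-LICQ for $\overline{\text{E-MPCC}}$ with MPCC-LICQ for \eqref{eq:e-mpcc}, up to a rank-preserving permutation of columns between the orderings $(u,v)=(\ut,\uw,\vt,\vw)$ and $(\ut,\vt,\uw,\vw)$. The main obstacle is purely bookkeeping: tracking the index-set splits $\alpha=(\alpt,\alpw)$ and the decomposition of $\setUV$ into the $t$- and $w$-blocks consistently between the compact and expanded forms. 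No new analytical content is required, since the entire argument is already contained in \cref{th:likq-licq-i}.
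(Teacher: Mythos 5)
Your proposal is correct and follows essentially the same route as the paper, which disposes of this theorem by passing to the compact forms \eqref{eq:be-nlp} and \eqref{eq:be-mpcc} and noting that the argument of \cref{th:likq-licq-i} applies verbatim (with $\setA=\emptyset$). Your additional checks --- strict lower triangularity of $\pd_2\bcZ$ and the rank-preserving column permutation between the variable orderings --- are exactly the bookkeeping the paper leaves implicit.
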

\begin{proof}
  This follows as in the proof of \cref{th:likq-licq-i}.
\end{proof}

Note that this directly implies the next result since LIKQ and IDKQ as well as MPCC-LICQ and MPCC-MFCQ coincide in the purely equality constrained setting.

\ifcase1
\begin{corollary}[Equivalence of IDKQ and MPCC-MFCQ]\label{co:idkq-mfcq-e}
  IDKQ for \eqref{eq:e-anf} holds at $x\in\Feabs$ if and only if
  MPCC-MFCQ for \eqref{eq:e-mpcc} holds at
\or
\begin{corollary}[IDKQ $\iff$ MPCC-MFCQ]\label{co:idkq-mfcq-e}
  IDKQ for \eqref{eq:e-anf} at $x\in\Feabs$ is equiv\-a\-lent to
  MPCC-MFCQ for \eqref{eq:e-mpcc} at
\fi
\ifcase1
   $(x,u,v)\in\Domx\x\R^{s_t+m_2}\x\R^{s_t+m_2}$ with $u=[z(x)]^+$ and $v=[z(x)]^-$.
\else
   $(x,u,v)=(x,[z(x)]^+,[z(x)]^-) \in \Fempcc$.
\fi
\end{corollary}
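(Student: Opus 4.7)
The plan is to reduce \cref{co:idkq-mfcq-e} to the already-established \cref{th:likq-licq-e} by exploiting the fact that both the slack reformulation \eqref{eq:e-anf} and its counterpart \eqref{eq:e-mpcc} contain no genuine inequality constraints beyond the complementarities. This will cause the interior-direction and Mangasarian--Fromovitz clauses in the respective definitions to trivialise, collapsing them onto the corresponding linear-independence conditions.

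First, I would verify that IDKQ and LIKQ coincide for \eqref{eq:e-anf} at the point under consideration. Since the original inequalities $\cI(t,\abs{\zt}) \ge 0$ of \eqref{eq:i-anf} have been converted to equalities of the form $\cI(t,\abs{\zt}) - \abs{\zw} = 0$ in the slack formulation, the Jacobian $\Jeabs$ displayed in \cref{lem:e-likq} contains no active-inequality block $\JA$ at all. Consequently the conditions of \cref{def:idkq} degenerate: the full-row-rank part is exactly the requirement of \cref{def:likq}, and the interior-direction condition $\JA d > 0$ becomes vacuous. Hence IDKQ and LIKQ are equivalent for \eqref{eq:e-anf}.

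Next, I would perform the mirror reduction on the MPCC side. In \eqref{eq:e-mpcc} the only remaining inequalities are the complementarities $0 \le u \compl v \ge 0$, and these are folded into the tightened-NLP Jacobian $\Jempcc$ of \cref{lem:e-mpcc_licq}; no separate active-inequality block $\pd_1 \cA$ appears. Therefore \cref{def:mpcc-mfcq} reduces to \cref{def:mpcc-licq} for \eqref{eq:e-mpcc}, and MPCC-LICQ and MPCC-MFCQ coincide at the feasible point in question.

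Chaining these two vacuous-inequality reductions with \cref{th:likq-licq-e} then completes the argument: IDKQ for \eqref{eq:e-anf} at $x \in \Feabs$ is equivalent to LIKQ for \eqref{eq:e-anf} at the same point, which by \cref{th:likq-licq-e} is equivalent to MPCC-LICQ for \eqref{eq:e-mpcc} at $(x,[z(x)]^+,[z(x)]^-) \in \Fempcc$, which in turn is equivalent to MPCC-MFCQ for \eqref{eq:e-mpcc} at that point. I do not foresee any genuine obstacle here; the only subtlety worth explicitly recording is that each ``vacuous inequality'' collapse is an \emph{equivalence} rather than merely an implication, which follows immediately since the direction condition in both \cref{def:idkq} and \cref{def:mpcc-mfcq} takes the form ``there exists $d$ with $[\text{empty matrix}]\,d > 0$'' and is therefore satisfied by any $d$ compatible with the equality block.
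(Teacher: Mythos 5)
Your proposal is correct and follows essentially the same route as the paper: the text preceding \cref{co:idkq-mfcq-e} notes that IDKQ coincides with LIKQ for \eqref{eq:e-anf} and MPCC-MFCQ coincides with MPCC-LICQ for \eqref{eq:e-mpcc} because neither formulation has inequalities beyond the complementarities, and then the corollary is obtained directly from \cref{th:likq-licq-e}. Your added observation that each collapse is a genuine equivalence (the empty active-inequality block makes the direction condition vacuous while leaving the rank condition identical) is exactly the justification the paper leaves implicit.
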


\begin{figure}[tp]
    \begin{center}{
        \begin{tikzpicture}[ampersand replacement=\&]
            \matrix (m) [matrix of math nodes,nodes={draw,anchor=center,text centered,align=center,text width=3.2cm,rounded corners,minimum width=2cm, minimum height=1.5cm},column sep=12em,row sep=6em]
            {
            |(A)| {\begin{array}{c} \textup{Reformulation} \\ \textup{\eqref{eq:e-anf}} \end{array}}  \& |(D)|  {\begin{array}{c} \textup{Reformulation} \\ \textup{\eqref{eq:e-mpcc}} \end{array}}   \\
            |(B)| {\begin{array}{c} \textup{\kern-3pt Abs-normal NLP} \\
                \textup{\eqref{eq:i-anf}} \end{array}}  \& |(E)| {\begin{array}{c} \textup{General MPCC} \\ \textup{\eqref{eq:i-mpcc}} \end{array}}   \\
            };
            \begin{scope}[>={LaTeX[width=2mm,length=2mm]},->,line width=.5pt]
                \tikzstyle{every node}=[font=\footnotesize]
                \path[dashed] ([yshift=1em]A.east) edge[<->] node[below]{\cref{co:idkq-mfcq-e}}([yshift=1em]D.west);
                \path[] ([yshift=-1em]A.east) edge[<->] node[below] {\cref{th:likq-licq-e}} ([yshift=-1em]D.west);

                \path[dashed] ([xshift=-2em]A.south) edge[->] node[above,rotate=-90]{\cref{th:idkq}}([xshift=-2em]B.north);
                \path[] ([xshift=2em]A.south) edge[<->] node[above,rotate=-90]{\cref{th:likq}}([xshift=2em]B.north);

                \path[] ([xshift=-2em]D.south) edge[<->] node[above,rotate=90]{(implied)}([xshift=-2em]E.north);
                \path[dashed] ([xshift=2em]D.south) edge[->] node[above,rotate=90]{(implied)}([xshift=2em]E.north);

                \path[] ([yshift=1em]B.east) edge[<->] node[above] {\cref{th:likq-licq-i}} ([yshift=1em]E.west);
                \path[dashed] ([yshift=-1em]B.east) edge[<->]  node[above] {\cref{th:idkq-mfcq-i}} ([yshift=-1em]E.west);
            \end{scope}
        \end{tikzpicture}}
    \end{center}
    \caption{Solid arrows: relations between LIKQ and MPCC-LICQ;
      dashed arrows: relations between IDKQ and MPCC-MFCQ.}
    \label{fig:relations}
\end{figure}
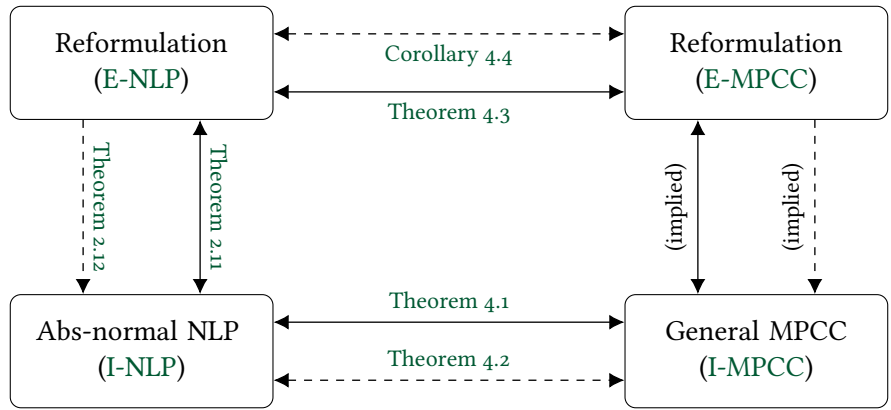

\section{Optimality Conditions}
\label{sec:opt-cond}

In this section we consider first and second order optimality conditions
for \eqref{eq:i-mpcc} under MPCC-LICQ and for \eqref{eq:i-anf} under LIKQ,
respectively, and discuss their relations.
Since both regularity conditions are invariant under the slack reformulation
by \cref{th:likq} and \cref{th:licq},
the results hold also for \eqref{eq:e-mpcc} and \eqref{eq:e-anf}.
Conditions for general MPCCs can be found in the literature;
in case of first order conditions for example in~\cite{Luo_Pang_Ralph_1996,Scheel_Scholtes_2000}.
Second order conditions stated in \cite{Luo_Pang_Ralph_1996,Scheel_Scholtes_2000}
however have to be adapted to our different setting.
For the abs-normal NLP \eqref{eq:e-anf} we have derived first and
second order conditions in \cite{Hegerhorst_Steinbach:2019}.
Since LIKQ is preserved under the slack reformulation by \cref{th:likq},
we can transfer these results directly to \eqref{eq:i-anf}.

\subsection{First Order Optimality Conditions}
In this paragraph, we compare stationarity concepts and first order optimality conditions
for \eqref{eq:i-mpcc} and \eqref{eq:i-anf}.
\ifcase1
\textcolor{orange}{
In this section, we compare strong stationarity for MPCCs to kink stationarity for abs-normal NLPs.
We give proofs based on the inequality constrained formulation and highlight differences to the equality constrained formulation where necessary.
}
\fi
First, we define strong stationarity for ~\eqref{eq:i-mpcc}
and state the corresponding first order conditions.

\begin{definition}[Strong Stationarity,
 see {\citep[\S3.3]{Luo_Pang_Ralph_1996}\ifnum\FmtChoice=1\ and \else, \fi
 \citep[Thm.~2]{Scheel_Scholtes_2000}}]
 \label{def:s-stat}
  A feasible point $y^*=(t^*,(\ut)^*,(\vt)^*)$ of \eqref{eq:i-mpcc}
  is \emph{strongly stationary} (\emph{S-stationary})
  if there exist multipliers $\lambda = (\lamE,\lamI,\lamZ)$ and
  $\mu=(\muu,\muv)$ such that the following conditions are satisfied:
  \begin{subequations}\label{eq:s-stat}
  \begin{align}
    \pd_y \Lc(y^*, \lambda, \mu) &= 0, \label{eq:s-stat-a} \\
    (\muu)_i \ge 0, \
    (\muv)_i & \ge 0, \quad i \in \setDt(t^*), \label{eq:s-stat-b} \\
    (\muu)_i &= 0, \quad i \in \setUt_+(t^*), \label{eq:s-stat-c} \\
    (\muv)_i &= 0, \quad i \in \setVt_+(t^*), \label{eq:s-stat-d} \\
    \lamI &\ge 0, \label{eq:s-stat-e} \\
    \lamI^T \cI(t^*, (\ut)^* + (\vt)^*) &= 0 \label{eq:s-stat-f}.
  \end{align}
  \end{subequations}
  Herein, $\Lc$ is the MPCC-Lagrangian function associated with \eqref{eq:i-mpcc}:
  \begin{align*}
    \Lc(y, \lambda, \mu) \define f(t)
    &+\lamE^T\cE(t,\ut + \vt) - \lamI^T\cI(t,\ut+\vt)\\
    &+\lamZ^T[\cZ(t,\ut+\vt) - (\ut - \vt)] - \muu^T \ut - \muv^T \vt.
  \end{align*}
\end{definition}

\begin{theorem}[First Order Optimality Conditions for \eqref{eq:i-mpcc}]
  \label{th:local-min-strong-stat}
\ifnum\FmtChoice=1
  Assume that the point $y^* = (t^*,(\ut)^*,(\vt)^*)$
  is a local minimizer of \eqref{eq:i-mpcc}
  and that MPCC-LICQ holds at $y^*$.
  Then,  $y^*$ is an S-stationary point.
\else
  Assume that $(t^*,(\ut)^*,(\vt)^*)$ is a local minimizer of \eqref{eq:i-mpcc}
  and that MPCC-LICQ holds at $(t^*,(\ut)^*,(\vt)^*)$.
  Then,  $(t^*,(\ut)^*,(\vt)^*)$ is an S-stationary point.
\fi
\end{theorem}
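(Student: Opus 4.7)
The plan is to follow the classical reduction of Scheel and Scholtes: prove the statement by applying ordinary KKT theory to an associated smooth NLP. First I would introduce the \emph{tightened NLP} (TNLP) at $y^* = (t^*,(\ut)^*,(\vt)^*)$, obtained from \eqref{eq:i-mpcc} by dropping the sign constraints $\ut,\vt \ge 0$ and inserting the sharp equalities $\ut_i = 0$ for $i \in \setUt_0(t^*)$ and $\vt_i = 0$ for $i \in \setVt_0(t^*)$. Because $\ut_j > 0$ and $\vt_j > 0$ persist in a neighbourhood of $y^*$ for $j \in \setUt_+(t^*)$ and $j \in \setVt_+(t^*)$ respectively, local feasibility of the TNLP coincides with local feasibility of \eqref{eq:i-mpcc}, so $y^*$ is also a local minimizer of the TNLP.

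Next, I would observe that MPCC-LICQ as stated in \cref{def:mpcc-licq} is exactly LICQ for this TNLP at $y^*$: the block columns corresponding to the projectors $\PUt0$ and $\PVt0$ that are omitted in $\Jmpcc$ reappear as trivial unit rows attached to the newly added equalities $\ut_i = 0$ and $\vt_i = 0$, so full row rank carries over. Standard KKT then yields multipliers $\lamE,\lamZ$, $\lamI \ge 0$, and $\muu_i$ for $i \in \setUt_0$, $\muv_i$ for $i \in \setVt_0$ satisfying the stationarity $\pd_y \Lc(y^*,\lambda,\mu) = 0$ together with $\lamI^T \cI(t^*,(\ut)^*+(\vt)^*) = 0$. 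Setting $\muu_i := 0$ for $i \in \setUt_+$ and $\muv_i := 0$ for $i \in \setVt_+$ furnishes \eqref{eq:s-stat-a} and \eqref{eq:s-stat-c}--\eqref{eq:s-stat-f} at once.

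The remaining and main step is verifying the sign condition \eqref{eq:s-stat-b} for $i \in \setDt(t^*)$. For a fixed such $i$ I would appeal to MPCC-LICQ and the implicit function theorem to construct a smooth feasible arc $\tau \mapsto y(\tau) \in \Fmpcc$ with $y(0) = y^*$, $\dot\ut_i(0) = 1$, $\dot\vt_i(0) = 0$, and $\dot\ut_j(0) = \dot\vt_j(0) = 0$ for every other $j \in \setDt$. Full row rank of $\Jmpcc$ is precisely the surjectivity required to solve $\cE = 0$, $\cZ - (\ut-\vt) = 0$ and the active inequalities $\cA = 0$ for the remaining coordinates while prescribing this tangent. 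Local optimality forces $\tfrac{d}{d\tau} f(t(\tau))\big|_{\tau=0^+} \ge 0$; contracting \eqref{eq:s-stat-a} with $(\dot t(0),\dot\ut(0),\dot\vt(0))$ collapses all terms except $\muu_i$, so $\muu_i \ge 0$. The symmetric construction with the roles of $\ut_i$ and $\vt_i$ exchanged gives $\muv_i \ge 0$. The hardest part is engineering the arc so that degenerate complementarity is simultaneously preserved at all other indices of $\setDt$ and so that the contraction of the stationarity identity cleanly isolates a single multiplier; both steps are where the full row rank of $\Jmpcc$ is indispensable.
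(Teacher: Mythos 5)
Your argument is correct and is essentially the proof the paper relies on: the paper gives no proof of its own but cites \citep[\S3.3]{Luo_Pang_Ralph_1996} and \citep[Thm.~2]{Scheel_Scholtes_2000}, and what you reconstruct --- passing to the tightened NLP, applying standard KKT theory under LICQ (noting that $\Jmpcc$ having full row rank is LICQ for the tightened NLP once the unit blocks are reinstated), and then establishing the sign conditions $(\muu)_i, (\muv)_i \ge 0$ on $\setDt$ via feasible arcs along the branch problems --- is exactly that classical reduction. One minor imprecision: the feasible set of the tightened NLP does not locally \emph{coincide} with $\Fmpcc$ (it is strictly smaller whenever $\setDt \ne \emptyset$, since the MPCC still permits $\ut_i > 0$, $\vt_i = 0$ there); only the local inclusion of the tightened feasible set into $\Fmpcc$ holds, but that is the direction you actually use to conclude that $y^*$ is a local minimizer of the tightened NLP, so the argument goes through.
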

\begin{proof}
  The proof is due to \citep[\S3.3]{Luo_Pang_Ralph_1996} and is presented in \citep[Thm.~2]{Scheel_Scholtes_2000} in the form used here.
\end{proof}

Now, kink stationarity is defined and the corresponding first order optimality conditions are formulated.

\begin{definition}[Kink Stationarity, see \cite{Hegerhorst_Steinbach:2019}]\label{def:kink-stat}
  A feasible point $(t^*,(\zt)^*)$ of \eqref{eq:i-anf}
  is \emph{kink stationary}
  if there exist multipliers $\lambda = (\lamE,\lamI,\lamZ)$
  such that the following conditions are satisfied:
  \begin{subequations}
    \label{eq:kink-stat}
    \begin{align}
      \qquad f'(t^*) +
      \lamE^T \pd_1 \cE -
      \lamI^T \pd_1 \cI +
      \lamZ^T \pd_1 \cZ &= 0, \label{eq:kink-stat-a} \\
      [\lamE^T \pd_2 \cE -
      \lamI^T \pd_2 \cI +
      \lamZ^T \pd_2 \cZ]_i &\ge \abs{(\lamZ)_i},
      & i &\in \alpt(t^*), \qquad \label{eq:kink-stat-b} \\
      [\lamE^T \pd_2 \cE -
      \lamI^T \pd_2 \cI +
      \lamZ^T \pd_2 \cZ]_i &= (\lamZ)_i (\sigt)^*_i,
      & i &\notin \alpt(t^*), \label{eq:kink-stat-c} \\
      \lamI &\ge 0, \label{eq:kink-stat-d} \\
      \lamI^T \cI &= 0 \label{eq:kink-stat-e}.
    \end{align}
  \end{subequations}
  Here, the constraints and the partial derivatives are evaluated
  at $(t^*,\abs{(\zt)^*})$.
\end{definition}

\begin{theorem}[First Order Conditions for \eqref{eq:i-anf}]
  \label{th:local-min-kink-stat}
  Assume that $(t^*,(\zt)^*)$ is a local minimizer of \eqref{eq:i-anf}
  and that LIKQ holds at $t^*$.
  Then, $(t^*,(\zt)^*)$ is a kink stationary point.
\end{theorem}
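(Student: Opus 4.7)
My plan is to reduce the claim to the S-stationarity theorem for the counterpart MPCC. Given a local minimizer $(t^*, (\zt)^*)$ of \eqref{eq:i-anf}, the homeomorphism from \cref{lem:hom-F-i} produces a feasible point $y^* = (t^*, [(\zt)^*]^+, [(\zt)^*]^-)$ of \eqref{eq:i-mpcc}; since $\phi$ preserves the objective and takes open neighborhoods to open neighborhoods, $y^*$ is a local minimizer of \eqref{eq:i-mpcc}. By \cref{th:likq-licq-i}, LIKQ at $t^*$ is equivalent to MPCC-LICQ at $y^*$, so \cref{th:local-min-strong-stat} delivers multipliers $\lambda = (\lamE, \lamI, \lamZ)$ and $\mu = (\muu, \muv)$ satisfying \eqref{eq:s-stat}. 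I will then use the \emph{same} triple $\lambda$ as the kink-stationarity multiplier and show that \eqref{eq:kink-stat} follows from \eqref{eq:s-stat}.

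The key translation device comes from differentiating $\Lc$ with respect to $\ut$ and $\vt$ separately. Using $\pd_{\ut}[\cZ - (\ut - \vt)] = \pd_2 \cZ - I$ and $\pd_{\vt}[\cZ - (\ut - \vt)] = \pd_2 \cZ + I$, the vanishing of these partial derivatives yields, componentwise,
\begin{align*}
  [\lamE^T \pd_2 \cE - \lamI^T \pd_2 \cI + \lamZ^T \pd_2 \cZ]_i - (\lamZ)_i &= (\muu)_i, \\
  [\lamE^T \pd_2 \cE - \lamI^T \pd_2 \cI + \lamZ^T \pd_2 \cZ]_i + (\lamZ)_i &= (\muv)_i.
\end{align*}
The $t$-block of $\pd_y \Lc(y^*,\lambda,\mu) = 0$ gives \eqref{eq:kink-stat-a} immediately, and \eqref{eq:s-stat-e}, \eqref{eq:s-stat-f} transfer verbatim to \eqref{eq:kink-stat-d}, \eqref{eq:kink-stat-e}.

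A case split on $i$ then recovers the remaining conditions. Since $\setDt(t^*) = \alpt(t^*)$ at any feasible point of the MPCC, the sign restriction \eqref{eq:s-stat-b} combined with both identities yields $[\cdots]_i \ge (\lamZ)_i$ and $[\cdots]_i \ge -(\lamZ)_i$, i.e.\ \eqref{eq:kink-stat-b}. For $i \in \setUt_+(t^*)$ we have $(\sigt)^*_i = 1$ and $(\muu)_i = 0$ by \eqref{eq:s-stat-c}, so the first identity gives $[\cdots]_i = (\lamZ)_i = (\lamZ)_i (\sigt)^*_i$; symmetrically, for $i \in \setVt_+(t^*)$ the second identity gives $[\cdots]_i = -(\lamZ)_i = (\lamZ)_i (\sigt)^*_i$. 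Together these cover \eqref{eq:kink-stat-c}. The only delicate step in the argument is the componentwise sign bookkeeping for the $\ut$- and $\vt$-derivatives; the rest is essentially formal once the MPCC counterpart and the equivalence of constraint qualifications have been set up in the previous sections.
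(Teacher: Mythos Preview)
Your proof is correct but takes a genuinely different route from the paper's own argument. The paper proves \cref{th:local-min-kink-stat} by passing to the slack reformulation \eqref{eq:e-anf} (using that LIKQ is preserved by \cref{th:likq}), invoking the kink-stationarity result \cite[Theorem~5.10]{Hegerhorst_Steinbach:2019} for the purely equality-constrained problem \eqref{eq:be-nlp}, and then unwinding the compact notation with $\blamE = (\lamE,-\lamI)$, $\blamZ = (\lamZ,\lamZw)$ and eliminating $\lamZw = 0$. You instead pass to the counterpart MPCC \eqref{eq:i-mpcc} via the homeomorphism of \cref{lem:hom-F-i}, use \cref{th:likq-licq-i} to obtain MPCC-LICQ, apply \cref{th:local-min-strong-stat} to get S-stationarity, and then translate \eqref{eq:s-stat} into \eqref{eq:kink-stat}. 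That translation is precisely the computation carried out later in \cref{thm:s-stat-is-kink-stat}, and the paper itself remarks after that theorem that this gives ``a different perspective'' on \cref{th:local-min-kink-stat}. Your approach has the virtue of being self-contained within the paper (no appeal to the external reference \cite{Hegerhorst_Steinbach:2019} is needed); the paper's route, on the other hand, stays entirely within the abs-normal framework and does not rely on the MPCC first-order theory.
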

\begin{proof}
  By \cref{th:likq} we may consider the slack reformulation
  \eqref{eq:e-anf} instead of \eqref{eq:i-anf}.
  In \cite[Theorem 5.10]{Hegerhorst_Steinbach:2019},
  conditions \eqref{eq:kink-stat} were proven for \eqref{eq:be-nlp}
  using a splitting of the switching variables $z$
  and the switching constraints $\cZ$.
  Without the splitting they read:
  \begin{align*}
    \qquad \_f'(x^*) +
    \blamE^T \pd_1 \bcE(x^*,\abs{z^*}) +
    \blamZ^T \pd_1 \bcZ(x^*,\abs{z^*}) &= 0, \\
    [\blamE^T \pd_2 \bcE(x^*,\abs{z^*}) +
    \blamZ^T \pd_2 \bcZ(x^*,\abs{z^*})]_i &\ge \abs{(\blamZ)_i},
    & i &\in \alpha(x^*), \qquad \\
    [\blamE^T \pd_2 \bcE(x^*,\abs{z^*}) +
    \blamZ^T \pd_2 \bcZ(x^*,\abs{z^*})]_i &= (\blamZ)_i \sigma^*_i,
    & i &\notin \alpha(x^*).
  \end{align*}
  We rewrite these conditions in the original notation of \eqref{eq:e-anf}
  with $\blamE = (\lamE, -\lamI)$ and $\blamZ = (\lamZ,\lamZw)$,
  where all derivatives are evaluated at $(t^*, \abs{(\zt)^*})$:
  \begin{align*}
    \qquad f'(t^*) +
    \lamE^T \pd_1 \cE -
    \lamI^T \pd_1 \cI +
    \lamZ^T \pd_1 \cZ &= 0, \\
    (\lamZw)^T &= 0, \\
    [\lamE^T \pd_2 \cE -
    \lamI^T \pd_2 \cI +
    \lamZ^T \pd_2 \cZ]_i &\ge \abs{(\lamZ)_i},
    & i &\in \alpt(t^*), \\
    [\lamE^T \pd_2 \cE -
    \lamI^T \pd_2 \cI +
    \lamZ^T \pd_2 \cZ]_i &= (\lamZ)_i (\sigt)_i^*,
    & i &\notin \alpt(t^*), \\
    \lamI &\ge \abs{(\lamZw)_i}, & i &\in \alpw(w^*), \\
    \lamI &= (\lamZw)_i \sigma_i^*, & i &\notin \alpw(w^*). \qquad
  \end{align*}
  The claim follows by eliminating $\lamZw = 0$
  and noting that $\alpw(w^*) = \setA(t^*)$.
\end{proof}

The next theorem shows that the two stationarity concepts coincide.

\begin{theorem}[S-Stationarity is Kink Stationarity]
  \label{thm:s-stat-is-kink-stat}
  A feasible point $(t^*,(\zt)^*)$ of \eqref{eq:i-anf} is kink stationary
  if and only if $(t^*,(\ut)^*,(\vt)^*)=(t^*,[\zt(t^*)]^+,[\zt(t^*)]^-)$
  of \eqref{eq:i-mpcc} is S-stationary.
\end{theorem}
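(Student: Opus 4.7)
The plan is a direct componentwise identification of the KKT conditions of the two problems after rewriting the active index sets in compatible notation. Since $\ut^* = [\zt(t^*)]^+$ and $\vt^* = [\zt(t^*)]^-$, we have $(\ut)^*_i + (\vt)^*_i = \abs{(\zt)^*_i}$ and the index-set translation
\begin{equation*}
  \setDt(t^*) = \alpt(t^*), \quad \setUt_+(t^*) = \set{i : (\sigt)^*_i = +1}, \quad \setVt_+(t^*) = \set{i : (\sigt)^*_i = -1}.
\end{equation*}
In particular, \eqref{eq:s-stat-e}--\eqref{eq:s-stat-f} and \eqref{eq:kink-stat-d}--\eqref{eq:kink-stat-e} agree verbatim (both involve $\lamI$ and $\cI$ evaluated at the same feasible point), so only the stationarity conditions on the $t$-, $\ut$-, and $\vt$-derivatives of $\Lc$ require analysis.

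Next, I would split $\pd_y \Lc(y^*,\lambda,\mu) = 0$ into its three blocks. The $t$-derivative reproduces \eqref{eq:kink-stat-a} verbatim. Introducing the row vector
\begin{equation*}
  M \define \lamE^T \pd_2 \cE - \lamI^T \pd_2 \cI + \lamZ^T \pd_2 \cZ \in \R^{1\x s_t}
\end{equation*}
evaluated at $(t^*, \abs{(\zt)^*})$, the $\ut$- and $\vt$-derivatives of $\Lc$ yield the componentwise equations
\begin{equation*}
  M_i - (\lamZ)_i = (\muu)_i, \qquad M_i + (\lamZ)_i = (\muv)_i, \qquad i = 1,\dots,s_t.
\end{equation*}

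The equivalence now reduces to a three-case analysis on these two equalities, informed by the sign rules \eqref{eq:s-stat-b}--\eqref{eq:s-stat-d}. For $i \in \setDt = \alpt$, the pair $(\muu)_i,(\muv)_i \ge 0$ is equivalent to $M_i \ge \abs{(\lamZ)_i}$, which is exactly \eqref{eq:kink-stat-b}. For $i \in \setUt_+$, where $(\sigt)^*_i = +1$, the constraint $(\muu)_i = 0$ forces $M_i = (\lamZ)_i = (\lamZ)_i (\sigt)^*_i$, while $(\muv)_i = 2(\lamZ)_i$ is freely recovered without sign restriction. Symmetrically, for $i \in \setVt_+$, where $(\sigt)^*_i = -1$, the constraint $(\muv)_i = 0$ forces $M_i = -(\lamZ)_i = (\lamZ)_i (\sigt)^*_i$ and $(\muu)_i = -2(\lamZ)_i$ is recovered freely. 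Both cases yield \eqref{eq:kink-stat-c}.

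The forward implication (S-stationary $\Rightarrow$ kink stationary) is then read off directly from these cases using the same $\lambda = (\lamE,\lamI,\lamZ)$; conversely, given a kink-stationary $\lambda$, defining $(\muu)_i$ and $(\muv)_i$ through the two displayed equalities produces S-stationary multipliers with exactly the correct sign properties. The only obstacle is the careful bookkeeping of signs in this three-case split; no analytic argument beyond elementary algebra and the identification of active sets is required.
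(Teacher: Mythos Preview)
Your proposal is correct and follows essentially the same approach as the paper: both split $\pd_y\Lc=0$ into its $t$-, $\ut$-, and $\vt$-blocks, identify the $t$-block with \eqref{eq:kink-stat-a}, and then do the three-case analysis on the index sets $\setDt,\setUt_+,\setVt_+$ to match \eqref{eq:kink-stat-b}--\eqref{eq:kink-stat-c}. Your version is a touch more explicit about the converse direction (reconstructing $\mu$ from the kink-stationary $\lambda$), but this is the same argument.
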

\begin{proof}
  Comparison of the stationarity conditions
  of \eqref{eq:i-anf} and \eqref{eq:i-mpcc}
  shows directly that \eqref{eq:s-stat-e} and \eqref{eq:kink-stat-d}
  as well as \eqref{eq:s-stat-f} and \eqref{eq:kink-stat-e} coincide.
  Thus, we have to compare the remaining conditions
  \eqref{eq:s-stat-a} to \eqref{eq:s-stat-d} for \eqref{eq:i-mpcc}
  with \eqref{eq:kink-stat-a} to \eqref{eq:kink-stat-c} for \eqref{eq:i-anf}.
  Condition \eqref{eq:s-stat-a} of \eqref{eq:i-mpcc},
  where all derivatives are evaluated at $(t^*,(\ut)^* + (\vt)^*)$, is
  \begin{align*}
   f'(t^*) +
   \lamE^T \pd_1 \cE -
   \lamI^T \pd_1 \cI +
   \lamZ^T \pd_1 \cZ &= 0, \\
   \lamE^T \pd_2 \cE -
   \lamI^T \pd_2 \cI +
   \lamZ^T [\pd_2 \cZ - I] - \muu^T &= 0, \\
   \lamE^T \pd_2 \cE -
   \lamI^T \pd_2 \cI +
   \lamZ^T [\pd_2 \cZ + I] - \muv^T &= 0.
  \end{align*}
  The first condition coincides with \eqref{eq:kink-stat-a}.
  We combine the second and the third condition
  with conditions \eqref{eq:s-stat-b} to \eqref{eq:s-stat-d},
  yielding
  \begin{align*}
    \qquad
    \left[ \lamE^T \pd_2 \cE -
      \lamI^T \pd_2 \cI +
      \lamZ^T \pd_2 \cZ \hphantom{[{}+I]} \right]_i &= +(\lamZ)_i,
    & i &\in \setUt_+(t^*), \qquad \\
    \left[ \lamE^T \pd_2 \cE -
      \lamI^T \pd_2 \cI +
      \lamZ^T \pd_2 \cZ \hphantom{[{}+I]} \right]_i &= -(\lamZ)_i,
    & i &\in \setVt_+(t^*), \\
    \left[ \lamE^T \pd_2 \cE -
      \lamI^T \pd_2 \cI +
      \lamZ^T [\pd_2 \cZ \pm I] \right]_i & \ge 0,
    & i &\in \setDt(t^*).
  \end{align*}
  These are precisely conditions \eqref{eq:kink-stat-b}
  and \eqref{eq:kink-stat-c} for \eqref{eq:i-anf}
  by definition of the index sets and of~$\sigma^*$.
\end{proof}

As LIKQ for \eqref{eq:i-anf} is equivalent to MPCC-LICQ for \eqref{eq:i-mpcc},
the previous theorem provides a different perspective
on \cref{th:local-min-kink-stat}
and \cref{th:local-min-strong-stat}:
one can be obtained from the other directly
via \cref{thm:s-stat-is-kink-stat} and vice versa.

\subsection{Second Order Conditions}\label{sec:soc}

In this paragraph, we compare second-order conditions for MPCCs and abs-normal NLPs.

First, we formulate them for \eqref{eq:i-mpcc}.
This is based on material from \cite{Scheel_Scholtes_2000} for MPCCs, but some additional assumptions on the Lagrange multipliers need to be made.
These are given in the next definition.

\begin{definition}[MPCC-Strict Complementarity]
\ifnum\FmtChoice=1
  Given an S-stationary point $(t^*, (\ut)^*, (\vt)^*)$
  with Lagrange multipliers $(\lambda^*, \mu^*)$,
  we say that \emph{MPCC-strict complementarity} holds
\else
  Consider a strongly stationary point $(t^*, (\ut)^*, (\vt)^*)$
  with Lagrange multipliers $(\lambda^*, \mu^*)$.
  We say that \emph{MPCC-strict complementarity} holds
\fi
  if $\lambda_i^* > 0$ for all $i \in \setA$ as well as
  $(\muu^*)_i > 0$ and $(\muv^*)_i > 0$ for all $i \in \setDt$.
\end{definition}

We will show in the next lemma that under MPCC-LICQ and
MPCC-strict complementarity the critical cone reduces to
the nullspace of the Jacobian of the tightened NLP
(with columns reordered according to the index sets
$\setUt_+, \setUt_0, \setVt_+, \setVt_0$),
\begin{align*}
  J(y^*) =
  \begin{bmatrix}
    \pd_1 \cE &
    \pd_2 \cE \PUt+^T & \pd_2 \cE \PUt0^T &
    \pd_2 \cE \PVt+^T & \pd_2 \cE \PVt0^T \\
    \pd_1 \cA &
    \pd_2 \cA \PUt+^T & \pd_2 \cA \PUt0^T &
    \pd_2 \cA \PVt+^T & \pd_2 \cA \PVt0^T \\
    \pd_1 \cZ &
    [\pd_2 \cZ - I] \PUt+^T & [\pd_2 \cZ - I] \PUt0^T &
    [\pd_2 \cZ + I] \PVt+^T & [\pd_2 \cZ + I] \PVt0^T \\
    0 & 0 & I & 0 & 0 \\
    0 & 0 & 0 & 0 & I
  \end{bmatrix}
  ,
\end{align*}
as introduced in \cref{subsec:i-mpcc}.
Here, all partial derivatives are evaluated at the point $(t^*,(\ut)^*+(\vt)^*)$.
It is readily verified that the nullspace of $J(y^*)$ is spanned by the matrix
\begin{equation}
  \label{eq:Umpcc}
  \Umpcc(y^*) =
  \begin{bmatrix}
    I \\
    +\PUt+ (I - \pd_2 \cZ \Sigma^t)^{-1} \pd_1 \cZ \\ 0 \\
    -\PVt+ (I - \pd_2 \cZ \Sigma^t)^{-1} \pd_1 \cZ \\ 0
  \end{bmatrix}
  \tilde{U}(y^*)
\end{equation}
where $\tilde{U}(y^*)$ spans the nullspace of
\begin{equation}
  \label{eq:U1}
  \begin{bmatrix}
    \pd_1 \cE +
    \pd_2 \cE \Sigma^t (I - \pd_2 \cZ \ \Sigma^t)^{-1} \pd_1 \cZ \\
    \pd_1 \cA +
    \pd_2 \cA \Sigma^t (I - \pd_2 \cZ \ \Sigma^t)^{-1} \pd_1 \cZ \\
    [e_i^T (I - \pd_2 \cZ \Sigma^t)^{-1} \pd_1 \cZ]_{i \in \setDt}
  \end{bmatrix}
  .
\end{equation}

\ifcase0
Second order necessary and sufficient conditions
for a slightly more general class of MPCCs
are given in \cite[Theorem~7]{Scheel_Scholtes_2000}
using the concept of critical directions.
We first specialize the definition from \cite{Scheel_Scholtes_2000}
to our setting.
\or
Second order conditions can now be stated and will be proved
using the following definition and the first part of Theorem~7
from \cite{Scheel_Scholtes_2000} (given in our notation).
\fi

\begin{definition}[Critical Direction]
  A vector $d = (dt, du^t, dv^t) \in \R^{n_t} \x \R^{s_t} \x \R^{s_t}$
  is called a \emph{critical direction}
  at a weakly stationary point $y^*$ of \eqref{eq:i-mpcc} if
  \begin{subequations}
  \begin{align}
    \min(du^t_i, dv^t_i) &= 0, \quad i \in \setDt, \label{eq:cd-min} \\
    du^t_i &= 0, \quad i \in \setVt_+, \label{eq:cd-V+} \\
    dv^t_i &= 0, \quad i \in \setUt_+, \label{eq:cd-U+} \\
    \pd_1 \cA du^t + \pd_2 \cA (du^t + dv^t) &\ge 0, \label{eq:cd-A} \\
    \pd_1 \cE du^t + \pd_2 \cE (du^t + dv^t) &= 0, \label{eq:cd-E} \\
    \pd_1 \cZ du^t +
    [\pd_2 \cZ - I] du^t + [\pd_2 \cZ + I] dv^t &= 0, \label{eq:cd-Z} \\
    f'(t^*) dt & =0, \label{eq:cd-f'}
  \end{align}
  \end{subequations}
  where all constraint derivatives are evaluated at $(t^*, (\ut)^* + (\vt)^*)$.
\end{definition}

The set of critical directions is just
the nullspace of $J(y^*)$ under stronger assumptions.

\begin{lemma}
  Assume that MPCC-LICQ and MPCC-strict complementarity hold
  at an S-stationary point $y^*=(t^*,(\ut)^*,(\vt)^*)$ of \eqref{eq:i-mpcc}
  with Lagrange multipliers $(\lambda^*,\mu^*)$.
  Then, the set of critical directions is $\ker J(y^*)$.
\end{lemma}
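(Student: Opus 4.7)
The plan is to prove both inclusions, with MPCC-strict complementarity entering decisively in one direction. The overall technique is to read off which critical-direction conditions come directly from the structure of $J(y^*)$, and then use the three blocks of the gradient equation \eqref{eq:s-stat-a} tested against~$d$ to close the remaining gaps.

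For $d \in \ker J(y^*)$ a critical direction: the last two block rows of $J(y^*)$ force $du^t_i = 0$ for $i \in \setUt_0$ and $dv^t_i = 0$ for $i \in \setVt_0$. Since $\setUt_0 = \setDt \cup \setVt_+$ and $\setVt_0 = \setDt \cup \setUt_+$, this immediately yields \eqref{eq:cd-V+}, \eqref{eq:cd-U+}, and the vanishing of both components on $\setDt$, hence \eqref{eq:cd-min}. Reassembling $\PUt+^T du^t_{\setUt_+} + \PUt{0}^T du^t_{\setUt_0} = du^t$ (and analogously for $v^t$), the first three block rows reduce to \eqref{eq:cd-E}, equality (and thus $\ge 0$) in \eqref{eq:cd-A}, and \eqref{eq:cd-Z}. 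Finally \eqref{eq:cd-f'} follows by testing the three blocks of \eqref{eq:s-stat-a} against $(dt, du^t, dv^t)$ and summing: the $\lamE$-, $\lamI$-, and $\lamZ$-contributions vanish by the kernel equations together with $(\lamI)_i = 0$ on inactive indices (from \eqref{eq:s-stat-e}--\eqref{eq:s-stat-f}), while \eqref{eq:s-stat-c}--\eqref{eq:s-stat-d} combined with $du^t_{\setUt_0} = dv^t_{\setVt_0} = 0$ annihilate the $\muu$- and $\muv$-contributions, leaving only $f'(t^*) dt = 0$.

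Conversely, let $d$ be a critical direction. Conditions \eqref{eq:cd-E} and \eqref{eq:cd-Z} already reproduce the first and third block rows of $J(y^*)$, so the nontrivial task is to upgrade \eqref{eq:cd-A} from $\ge 0$ to equality and \eqref{eq:cd-min} from $\min = 0$ to $du^t_i = dv^t_i = 0$ on $\setDt$. To this end, test \eqref{eq:s-stat-a} against $(dt, du^t, dv^t)$ and sum; using \eqref{eq:cd-E}, \eqref{eq:cd-Z}, \eqref{eq:cd-f'}, and $(\lamI)_i = 0$ for $i \notin \setA$, the identity collapses to
\begin{equation*}
  \sum_{i \in \setA} (\lamI)_i \bigl[\pd_1 c_i\, dt + \pd_2 c_i\, (du^t + dv^t)\bigr] + (\muu)^T du^t + (\muv)^T dv^t = 0.
\end{equation*}
Each of the three summands is nonnegative: the first by \eqref{eq:cd-A} together with $(\lamI)_i > 0$ on $\setA$ from MPCC-strict complementarity; the second since $(\muu)_i = 0$ on $\setUt_+$ by \eqref{eq:s-stat-c}, $du^t_i = 0$ on $\setVt_+$ by \eqref{eq:cd-V+}, while on $\setDt$ strict complementarity gives $(\muu)_i > 0$ paired with $du^t_i \ge 0$ (a consequence of \eqref{eq:cd-min}); the third analogously. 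Hence each summand vanishes, and strict positivity of the active multipliers forces equality in \eqref{eq:cd-A} and $du^t_i = dv^t_i = 0$ on $\setDt$. Combined with \eqref{eq:cd-V+} and \eqref{eq:cd-U+}, this yields $du^t_{\setUt_0} = 0$ and $dv^t_{\setVt_0} = 0$, so $d \in \ker J(y^*)$.

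The main obstacle is bookkeeping rather than any deep difficulty: one must keep the splittings $\setUt_0 = \setDt \cup \setVt_+$ and $\setVt_0 = \setDt \cup \setUt_+$ in mind to align the unit blocks of $J(y^*)$ with the critical-direction conditions, and carefully track which Lagrangian terms survive when \eqref{eq:s-stat-a} is tested against~$d$. MPCC-LICQ is not used directly in the argument but secures existence and uniqueness of the multipliers $(\lambda^*, \mu^*)$ that appear in the tests.
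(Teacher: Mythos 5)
Your proof is correct and follows essentially the same route as the paper: both directions hinge on testing the gradient equation \eqref{eq:s-stat-a} against $d$, reducing it to a sum of nonnegative products, and invoking MPCC-strict complementarity to force the degenerate components and the active-constraint directional derivatives to vanish. You merely present the two inclusions in the opposite order from the paper; the content is the same.
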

\begin{proof}
  First consider a critical direction $d = (dt, du^t, dv^t)$
  at the strongly (hence weakly) stationary point $y^*$.
  Then, \eqref{eq:cd-E} and \eqref{eq:cd-Z} imply
  that rows one and three of $J(y^*) d$ vanish,
  and by \eqref{eq:s-stat-a} and \eqref{eq:cd-f'} we further have
  \begin{align*}
    0 &= \pd_{t,\ut,\vt} \Lc(y^*,\lambda^*,\mu^*) d \\
    &= f'(t^*) dt
    + (\lamE^*)^T [\pd_1 \cE dt + \pd_2 \cE (du^t + dv^t)] \\
    &\quad{} - (\lamI^*)^T [\pd_1 \cI dt + \pd_2 \cI (du^t + dv^t)] \\
    &\quad{} + (\lamZ^*)^T [\pd_1 \cZ dt +
    (\pd_2 \cZ - I) du^t + (\pd_2 \cZ + I) dv^t] \\
    &\quad{} - (\muu^*)^T du^t - (\muv^*)^T dv^t \\
    &= -(\lamI^*)^T [\pd_1 \cI dt + \pd_2 \cI (du^t + dv^t)]
    - (\muu^*)^T du^t - (\muv^*)^T dv^t.
  \end{align*}
  With $(\lamI^*)\tp \cI = 0$ \eqref{eq:s-stat-f},
  $(\muu^*)_i = 0$ for $i \in \setUt_+$ \eqref{eq:s-stat-c},
  $(\muv^*)_i = 0$ for $i \in \setVt_+$ \eqref{eq:s-stat-d},
  and \eqref{eq:cd-V+}, \eqref{eq:cd-U+} we obtain
  $(\lamI^*)_i = 0$ for $i \notin \setA$ and further
  \begin{equation*}
    0 =
    (\lamA^*)^T [\pd_1 \cA du^t + \pd_2 \cA (du^t + dv^t)]
    + \sum_{i \in \setDt} [(\muu^*)_i du^t_i + (\muv^*)_i dv^t_i].
  \end{equation*}
  All factors in this sum of products are nonnegative by
  \eqref{eq:s-stat-b}, \eqref{eq:s-stat-e},
  \eqref{eq:cd-A}, and \eqref{eq:cd-min}, which implies
  \begin{align*}
    0 &= (\lamA^*)^T [\pd_1 \cA du^t + \pd_2 \cA (du^t + dv^t)], \\
    0 &= (\muu^*)_i du_i^t = (\muv^*)_i dv_i^t, \quad i \in \setDt.
  \end{align*}
  Finally, by MPCC-strict complementarity we have
  \begin{align*}
    0 &= \pd_1 \cA du^t + \pd_2 \cA (du^t + dv^t), \\
    0 &= du^t_i = dv^t_i, \quad i \in \setDt,
  \end{align*}
  and $du^t_i = 0$ for $i \in \setUt_0$
  as well as $dv^t_i = 0$ for $i \in \setVt_0$
  follow since $\setUt_0 = \setVt_+ \cup \setDt$
  and $\setVt_0 = \setUt_+ \cup \setDt$.
  Thus $d$ is a nullspace vector of $J(y^*)$.

  Conversely, given any nullspace vector $d = (dt, du^t, dv^t)$,
  the first three rows of $J(y^*) d = 0$ yield conditions
  \eqref{eq:cd-E}, \eqref{eq:cd-A}, and \eqref{eq:cd-Z},
  with equality ``$=0$'' in case of \eqref{eq:cd-A}.
  The last two rows yield
  $du^t_i = 0$ for $i \in \setUt_0$ and
  $dv^t_i = 0$ for $i \in \setVt_0$,
  hence \eqref{eq:cd-V+}, \eqref{eq:cd-U+},
  and $du^t_i = dv^t_i = 0$ for $i \in \setDt$ \eqref{eq:cd-min}.
  Moreover, we have
  $(\muu^*)_i = 0$ for $i \in \setUt_+$ \eqref{eq:s-stat-c},
  $(\muv^*)_i = 0$ for $i \in \setVt_+$ \eqref{eq:s-stat-d},
  and $(\lamI^*)_i = 0$ for $i \notin \setA$ \eqref{eq:s-stat-f},
  so that \eqref{eq:s-stat-a} becomes \eqref{eq:cd-f'}:
  \begin{equation*}
    0 = \pd_{t,\ut,\vt} \Lc(y^*,\lambda^*,\mu^*) d = f'(t^*) dt.
  \end{equation*}
  Thus $d$ is a critical direction.
\end{proof}

Now we use \cite[Theorem~7]{Scheel_Scholtes_2000}
to prove second order necessary and sufficient conditions for our setting.

\ifcase1
\begin{theorem*}
  Assume that $y^* = (t^*, (\ut)^*, (\vt)^*)$ is a local minimizer
  of \eqref{eq:i-mpcc} and that MPCC-SMFCQ (see \cite[\S 3.2]{Luo_Pang_Ralph_1996}, \cite{Scheel_Scholtes_2000} and \cite[Def.~3.1]{FlegelDiss}) holds at $y^*$.
  Then, $y^*$ is a strongly stationary point
  with a unique Lagrange multiplier vector $(\lambda^*, \mu^*)$,
  and every critical direction $d$ satisfies the inequality
  \begin{align*}
    d^T \Hmpcc(y^*, \lambda^*) d \ge 0.
  \end{align*}
\end{theorem*}
\fi
\begin{theorem}[Second Order Necessary Conditions for \eqref{eq:i-mpcc}]
  Assume that \ifnum\FmtChoice=1 the point \fi
  $y^*=(t^*,(\ut)^*,(\vt)^*)$ is a local minimizer of \eqref{eq:i-mpcc}
  and that MPCC-LICQ holds at~$y^*$.
  Denote by $(\lambda^*,\mu^*)$ the unique Lagrange multiplier vector
  and assume further that MPCC-strict complementarity holds.
  Then,
  \begin{align*}
    \Umpcc(y^*)^T \Hmpcc(y^*, \lambda^*) \Umpcc(y^*) \ge 0
  \end{align*}
  where $\Hmpcc(y^*, \lambda^*) = \pd^2_{yy} \Lc(y^*, \lambda^*, \mu^*)$.
  (Note that $\pd^2_{yy} \Lc$ does not depend on $\mu^*$.)
\end{theorem}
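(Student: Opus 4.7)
The plan is to assemble the claim from two ingredients that are already in hand: the second-order necessary condition of Scheel and Scholtes (the unnumbered theorem given just above in the paper) and the preceding lemma characterizing the critical cone as $\ker J(y^*)$. Since $\Umpcc(y^*)$ is displayed in \eqref{eq:Umpcc} as a basis of $\ker J(y^*)$, the matrix inequality will follow by a change of coordinates on the quadratic form.

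First I would verify the hypotheses needed to invoke the Scheel–Scholtes theorem. MPCC-LICQ at $y^*$ is strictly stronger than MPCC-SMFCQ (it yields full row rank of the tightened-NLP Jacobian $J(y^*)$, hence both uniqueness of the multiplier $(\lambda^*, \mu^*)$ and the MFCQ-type interior condition); so the cited theorem applies and gives
\begin{equation*}
  d^T \Hmpcc(y^*, \lambda^*) d \ge 0
\end{equation*}
for every critical direction $d$ at $y^*$.

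Next I would apply the preceding lemma: under MPCC-LICQ together with MPCC-strict complementarity (both of which are assumed), the set of critical directions at $y^*$ coincides with $\ker J(y^*)$. Combining this with the previous display yields $d^T \Hmpcc(y^*, \lambda^*) d \ge 0$ for every $d \in \ker J(y^*)$. Since by construction \eqref{eq:Umpcc} the columns of $\Umpcc(y^*)$ form a basis of $\ker J(y^*)$, every such $d$ can be written $d = \Umpcc(y^*) \tilde d$ for some $\tilde d$, and substitution gives the equivalent statement
\begin{equation*}
  \tilde d^T \Umpcc(y^*)^T \Hmpcc(y^*, \lambda^*) \Umpcc(y^*) \tilde d \ge 0
  \quad \text{for all } \tilde d,
\end{equation*}
which is the asserted positive semidefiniteness of the reduced Hessian.

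There is essentially no analytical obstacle here; the entire difficulty of relating the nonconvex critical cone of an MPCC to a linear subspace has been absorbed into the preceding lemma via MPCC-strict complementarity. The only routine bookkeeping points to mention are the remark that $\pd_{yy}^2 \Lc$ does not depend on $\mu^*$ (since $\mu^*$ only multiplies the linear slack-like terms $\ut$ and $\vt$ in $\Lc$), and the observation that MPCC-LICQ indeed implies MPCC-SMFCQ so that the Scheel–Scholtes result is legitimately available.
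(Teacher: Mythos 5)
Your argument is correct and follows the same route as the paper's own proof: invoke the first part of Theorem~7 of Scheel--Scholtes via the implication MPCC-LICQ $\Rightarrow$ MPCC-SMFCQ, use the preceding lemma to identify the critical cone with $\ker J(y^*)$ under MPCC-strict complementarity, and rewrite the resulting quadratic-form inequality using the basis $\Umpcc(y^*)$ of that nullspace. No discrepancies to report.
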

\begin{proof}
  The first part of Theorem~7 in \cite{Scheel_Scholtes_2000} asserts that
  every critical direction $d$ satisfies the inequality
   $d^T \Hmpcc(y^*, \lambda^*) d \ge 0$
  at a local minimizer $y^*$ if MPCC-SMFCQ (cf.~\cite{Luo_Pang_Ralph_1996}) holds at $y^*$.
  Since MPCC-LICQ implies MPCC-SMFCQ
  and the set of critical directions
  is $\ker J(y^*)$ under our stronger assumptions,
  the claim follows directly from \cite[Theorem~7]{Scheel_Scholtes_2000}.
\end{proof}
\begin{remark}
  Here we have simplified the exposition by
  making the assumption of MPCC-strict complementarity,
  so that we can directly rely on \cite[Theorem~7]{Scheel_Scholtes_2000}.
  However, the second order necessary conditions
  can also be proved without MPCC-strict complementarity
  by considering branch problems of \eqref{eq:i-mpcc}.
  The corresponding approach for \eqref{eq:i-anf}
  has been taken in \cite{Hegerhorst_Steinbach:2019},
  so that \cref{th:2nd-nec-ianf} below
  does not require strict complementarity.
\end{remark}

\begin{theorem}[Second Order Sufficient Conditions for \eqref{eq:i-mpcc}]
\ifcase1
  Assume that $y^*=(t^*,(\ut)^*,(\vt)^*)$ is feasible for \eqref{eq:i-mpcc}
  and that MPCC-LICQ holds at $y^*$.
  Assume further that $y^*$ is strongly stationary
  with Lagrange multiplier vector $(\lambda^*,\mu^*)$
  satisfying MPCC-strict complementarity, and that
\else
  Assume that \ifnum\FmtChoice=1 the point \fi
  $y^* = (t^*, (\ut)^*, (\vt)^*)$ is strongly stationary
  for \eqref{eq:i-mpcc} with Lagrange multiplier vector $(\lambda^*,\mu^*)$
  satisfying MPCC-strict complementarity.
  Assume further that MPCC-LICQ holds at $y^*$, and that
\fi
  \begin{align*}
    \Umpcc(y^*)^T \Hmpcc(y^*, \lambda^*) \Umpcc(y^*) > 0.
  \end{align*}
  Then, $y^*$ is a strict local minimizer of ~\eqref{eq:i-mpcc}.
\end{theorem}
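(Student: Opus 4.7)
The plan is to reduce the claim to the second part of Theorem~7 in \cite{Scheel_Scholtes_2000}, which asserts that at a strongly stationary point $y^*$ of an MPCC, $y^*$ is a strict local minimizer whenever every nonzero critical direction $d$ satisfies $d^T \Hmpcc(y^*, \lambda^*) d > 0$. The task therefore becomes to translate the matrix hypothesis $\Umpcc(y^*)^T \Hmpcc(y^*, \lambda^*) \Umpcc(y^*) > 0$ into this cone-based sufficient condition.

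First, I would invoke the preceding lemma: under MPCC-LICQ and MPCC-strict complementarity, the set of critical directions at $y^*$ coincides with $\ker J(y^*)$. By the explicit construction of $\Umpcc(y^*)$ in \eqref{eq:Umpcc}, this kernel is precisely the range of $\Umpcc(y^*)$, and $\Umpcc(y^*)$ has full column rank because $\tilde{U}(y^*)$ is chosen as a basis matrix for the nullspace of the matrix in \eqref{eq:U1}. Consequently, every nonzero critical direction admits a unique representation $d = \Umpcc(y^*) \tilde{d}$ with $\tilde{d} \neq 0$, and the assumed positive definiteness yields
\[
  d^T \Hmpcc(y^*, \lambda^*) d
  = \tilde{d}^T \Umpcc(y^*)^T \Hmpcc(y^*, \lambda^*) \Umpcc(y^*) \tilde{d} > 0.
\]
Combining this inequality with the S-stationarity of $y^*$ and the fact that MPCC-LICQ implies the weaker MPCC-SMFCQ used in \cite{Scheel_Scholtes_2000}, the second part of \cite[Theorem~7]{Scheel_Scholtes_2000} applies and gives that $y^*$ is a strict local minimizer of \eqref{eq:i-mpcc}.

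The main obstacle, and really the only nontrivial step, is the identification of the critical cone with $\ker J(y^*)$: this is precisely where MPCC-strict complementarity enters, since without it the critical cone would contain additional directions supported on the degenerate index set $\setDt$ that are not captured by the matrix condition $\Umpcc^T \Hmpcc \Umpcc > 0$. Once the cone collapses to a subspace spanned by $\Umpcc(y^*)$, the remainder reduces to a substitution argument, so beyond careful bookkeeping of the column ordering in $\Umpcc(y^*)$ to match the ordering in \cite[Theorem~7]{Scheel_Scholtes_2000}, no further technical work is required.
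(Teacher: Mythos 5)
Your proposal is correct and follows essentially the same route as the paper: both reduce the claim to the second part of Theorem~7 in \cite{Scheel_Scholtes_2000} after using the preceding lemma to identify the critical cone with $\ker J(y^*) = \operatorname{range} \Umpcc(y^*)$ under MPCC-LICQ and MPCC-strict complementarity. The only cosmetic difference is your invocation of MPCC-SMFCQ, which is not needed for the sufficiency half of that theorem.
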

\begin{proof}
  In the second part of \cite[Theorem~7]{Scheel_Scholtes_2000},
  our assertion is proved under the weaker assumption that
  $y^*$ is strongly stationary and for every critical direction $d \ne 0$
  there exists a Lagrange multiplier vector $(\lambda^*, \mu^*)$ such that
  $d^T \Hmpcc(y^*, \lambda^*) d > 0$.
  Under our additional assumptions of MPCC-LICQ and MPCC-strict complementarity,
  the set of critical directions is spanned by the matrix $\Umpcc(y^*)$,
  see proof of the previous lemma.
  Thus, the claim follows directly from \cite[Theorem~7]{Scheel_Scholtes_2000}.
\end{proof}

\ifcase1
\begin{remark}\label{rem:hessian}
  All terms of Lagrange multipliers $\mu$ in $\Lc$ are just linear. Thus, $\Hmpcc(y^*,\lambda^*,\mu^*)$ does not depend on $\mu^*$ and we will write just $\Hmpcc(y^*,\lambda^*)$ in the following.
Moreover, the Hessian can be interpreted as the second derivative of
\begin{align*}
 \Lc(t,\ut,\vt,\lambda) \define & \Lc(t,\ut,\vt,\lambda,\mu)+\muu^T \ut + \muv^T \vt \\
 = & f(t) +\lamE^T\cE(t,\ut+\vt) - \lamI^T\cI(t,\ut+\vt) +\lamZ^T[\cZ(t,\ut+\vt) - (\ut - \vt)].
\end{align*}
\end{remark}
\fi

We proceed by formulating second-order conditions for \eqref{eq:i-anf}.
To this end, we denote by $(\alpt)^c$ the complement of $\alpt$,
and we need the Lagrangian
\begin{align*}
  \Lagr(t, \abs{\zt}, \lambda) = f(t) +
  \lamE^T \cE(t, \abs{\zt}) -
  \lamI^T \cI(t, \abs{\zt}) +
  \lamZ^T \bigl(
    \cZ(t, \abs\zt) - \setP_{(\alpt)^c}^T\setP_{(\alpt)^c} \Sigma^t \abs\zt \bigr)
\end{align*}
and the matrix
\begin{equation}
  \label{eq:Uabs}
  \Uabs(t) \define
  \begin{bmatrix}
    U(t) \\ [e_i^T \Sigma  \pd_t \zt(t) U(t)]_{i \notin \alpt}
  \end{bmatrix}
  ,
\end{equation}
where $U(t)$ spans the nullspace of $\Jabs(t)$.
We also use the Lagrangian
\begin{align*}
  \_\Lagr(x, \abs{z}, \_\lambda) = \_f(x)
  &+ \blamE^T \bcE(x, \abs{z}) + \blamZ^T \bigl(
  \bcZ(x, \abs{z}) - \setP_{\alpha^c}^T\setP_{\alpha^c} \Sigma \abs{z} \bigr), \\
  {}= f(t)
  &+ \lamE^T \cE(t, \abs\zt) - \lamI^T
  \bigl(\cI(t, \abs{\zt}) - \abs\zw \bigr) \\
  &+ \lamZ^T \bigl( \cZ(t, \abs\zt) -
  \setP_{(\alpt)^c}^T\setP_{(\alpt)^c} \Sigma^t \abs{\zt} \bigr) \\
  &+ (\lamZw)^T \bigl(
  w - \setP_{(\alpw)^c}^T\setP_{(\alpw)^c} \Sigma^w \abs{\zw} \bigr).
\end{align*}

\begin{theorem}[Second Order Necessary Conditions for \eqref{eq:i-anf}]
  \label{th:2nd-nec-ianf}
  Assume that $y^* = (t^*, (\zt)^*)$ is a local minimizer of \eqref{eq:i-anf}
  and that LIKQ holds at $t^*$.
  Denote by $\lambda^*$ the unique Lagrange multiplier and set $\alpt=\alpt(t^*)$. Then,
  \begin{equation*}
    \Uabs(t^*)^T \Habs(y^*, \lambda^*) \Uabs(t^*) \ge 0
  \end{equation*}
  where $\Habs(y^*, \lambda^*) =
    \begin{bmatrix}
      I & 0 \\
      0 & P_{(\alpt)^c}
    \end{bmatrix}
    \begin{bmatrix}
      \pd_{11} \Lagr(y^*, \lambda^*) & \pd_{12} \Lagr(y^*, \lambda^*) \\
      \pd_{21} \Lagr(y^*, \lambda^*) & \pd_{22} \Lagr(y^*, \lambda^*)
    \end{bmatrix}
    \begin{bmatrix}
      I & 0 \\
      0 & P_{(\alpt)^c}^T
    \end{bmatrix}$.
\end{theorem}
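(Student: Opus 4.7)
The plan is to reduce the claim for \eqref{eq:i-anf} to the corresponding second order necessary condition for the slack reformulation \eqref{eq:e-anf}, which was already established in \cite{Hegerhorst_Steinbach:2019} for purely equality-constrained abs-normal NLPs and does not require any strict complementarity assumption. By \cref{th:likq}, LIKQ for \eqref{eq:i-anf} at $t^*$ is equivalent to LIKQ for \eqref{eq:e-anf} at every lifted point $(t^*, w^*, (\zt)^*, (\zw)^*)$ with $w^* \in W(t^*)$, and by the lifting structure of $\Feabs$ over $\Fabs$ each such lifting is a local minimizer of \eqref{eq:e-anf}. Writing \eqref{eq:e-anf} in the compact form \eqref{eq:be-nlp} with $x = (t,w)$ and $z = (\zt, \zw)$, the general second order necessary theorem yields
\begin{equation*}
  \Ueabs(x^*)^T \Heabs(y^*, \_\lambda^*) \Ueabs(x^*) \ge 0,
\end{equation*}
where $\Ueabs$ and $\Heabs$ are built from the data of \eqref{eq:be-nlp} in exact analogy to $\Uabs$ and $\Habs$.

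Next I would perform a block decomposition to identify this with the desired inequality on \eqref{eq:i-anf}. From the proof of \cref{th:local-min-kink-stat} the unique Lagrange multipliers satisfy $\blamE^* = (\lamE^*, -\lamI^*)$ and $\lamZw^* = 0$, and the explicit form of $\_\Lagr$ given just before the theorem shows that every term involving $w$ is linear while the term $(\lamZw)^T(w - \setP_{(\alpw)^c}^T \setP_{(\alpw)^c} \Sigma^w \abs{\zw})$ vanishes identically at $\_\lambda^*$. Hence the second derivative $\Heabs(y^*, \_\lambda^*)$ is block diagonal with respect to the splitting $(t,\zt) \oplus (w,\zw)$, its $(t,\zt)$-block coincides with $\Habs(y^*, \lambda^*)$, and its $(w,\zw)$-block is zero.

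Third, I would match the nullspace structures. Using $\alpw(w^*) = \setA(t^*)$ from the proof of \cref{th:likq}, the Jacobian $\Jeabs$ decouples in such a way that any nullspace vector of $\Jabs(t^*)$ extends canonically to a nullspace vector of $\Jeabs$ by forcing $d\zw_i = 0$ for $i \in \alpw$ and choosing $d\zw_i$ and $dw_i$ on the inactive indices through the linearized slack equations $\pd_t \cI(t^*,\abs{(\zt)^*}) \,dt - \Sigma^w d\zw = 0$ and $dw = d\zw$; conversely, the $(dt,d\zt)$-projection of any nullspace vector of $\Jeabs$ lies in the range of $\Uabs(t^*)$. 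Combined with the zero $(w,\zw)$-block of $\Heabs$, this shows that the quadratic form $d^T \Heabs(y^*, \_\lambda^*)d$ depends only on the $(dt,d\zt)$-components and equals $(dt, d\zt)^T \Habs(y^*, \lambda^*)(dt, d\zt)$ evaluated after the $P_{(\alpt)^c}$ projection of $d\zt$ onto inactive indices. Thus $\Ueabs^T \Heabs \Ueabs \ge 0$ reduces exactly to $\Uabs(t^*)^T \Habs(y^*, \lambda^*) \Uabs(t^*) \ge 0$.

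The main obstacle I anticipate is the careful bookkeeping of the block correspondence between $(\Ueabs, \Heabs)$ and $(\Uabs, \Habs)$ — in particular, tracking how the projection $P_{(\alpt)^c}$ in the definition of $\Habs$ arises naturally from restricting the inactive-switching rows of $\Ueabs$ to the $\zt$-block, and verifying that the extra $(dw, d\zw)$ directions truly contribute nothing thanks to $\lamZw^* = 0$ and the linearity of the $w$-terms. Once this decomposition is carried out, the conclusion follows directly, and no strict complementarity hypothesis is needed because the underlying result for \eqref{eq:be-nlp} is proved via branch problems as in \cite{Hegerhorst_Steinbach:2019} rather than via \cite{Scheel_Scholtes_2000}.
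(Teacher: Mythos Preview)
Your proposal is correct and follows essentially the same route as the paper: reduce to the slack reformulation \eqref{eq:e-anf} via \cref{th:likq}, invoke the second order necessary result for \eqref{eq:be-nlp} from \cite{Hegerhorst_Steinbach:2019}, and then block-decompose $\Heabs$ and $\Ueabs$ to show that the reduced Hessian quadratic form collapses to $\Uabs(t^*)^T \Habs(y^*,\lambda^*)\Uabs(t^*)$ because all $w$- and $\zw$-contributions vanish by linearity. The paper carries out the same bookkeeping you anticipate, making the nullspace basis explicit as $\_U(x) = \bigl[\,U(t)^T \ (\Sigma^w \pd_t \cI)^T\,\bigr]^T$ and verifying the block-zero structure of $\Heabs$ directly.
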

\begin{proof}
  As in \cref{th:local-min-kink-stat}, we can consider
  \eqref{eq:e-anf} instead of \eqref{eq:i-anf} by \cref{th:likq}.
  In \cite[Theorem 5.15]{Hegerhorst_Steinbach:2019}
  the second order necessary conditions for \eqref{eq:be-nlp}
  have been derived using a variable splitting.
  Without the splitting they read
  \begin{align*}
    \Ueabs(x^*)^T \Heabs(\_y^*, \_\lambda^*) \Ueabs(x^*) \ge 0
  \end{align*}
  with $\_y^* = (x^*, z^*)$, $\blamE = (\lamE, -\lamI)$,
  $\blamZ = (\lamZ,\lamZw)$, and the Hessian
  \begin{align*}
    \Heabs(\_y^*, \_\lambda^*) =
    \begin{bmatrix}
      I & 0 \\ 0 & P_{\alpha^c}
    \end{bmatrix}
    \begin{bmatrix}
      \pd_{11} \_\Lagr(\_y^*, \_\lambda^*) &
      \pd_{12} \_\Lagr(\_y^*, \_\lambda^*) \\
      \pd_{21} \_\Lagr(\_y^*, \_\lambda^*) &
      \pd_{22} \_\Lagr(\_y^*, \_\lambda^*)
    \end{bmatrix}
    \begin{bmatrix}
      I & 0 \\ 0 & P_{\alpha^c}^T
    \end{bmatrix}
    ,
  \end{align*}
  where $\alpha^c$ is the complement of $\alpha$
  and the matrix $\Ueabs$ is defined as
  \begin{align*}
    \Ueabs(x^*) =
    \begin{bmatrix}
      \_U(x^*) \\ [e_i^T \Sigma^* \pd_x z(x^*) \_U(x^*)]_{i \notin \alpha}
    \end{bmatrix}
  \end{align*}
  with $\_U(x)$ spanning $\ker(\Jeabs(x))$.
  Using the special structure of \eqref{eq:be-nlp} and comparing the derivatives of $\_\Lagr(\_y^*, \_\lambda^*)$
  and $\Lagr(y^*, \lambda^*)$, the Hessian becomes:
  \begin{align*}
    \Heabs(\_y^*, \_\lambda^*) =
    \begin{bmatrix}
      I & 0 & 0 & 0 \\
      0 & I & 0 & 0 \\
      0 & 0 & P_{(\alpt)^c} & 0 \\
      0 & 0 & 0 & P_{(\alpw)^c}
    \end{bmatrix}
    \begin{bmatrix}
      \pd_{11} \Lagr & 0 & \pd_{12} \Lagr & 0 \\
      0 & 0 & 0 & 0 \\
      \pd_{21} \Lagr & 0 & \pd_{22} \Lagr & 0 \\
      0 & 0 & 0 & 0 \\
    \end{bmatrix}
    \begin{bmatrix}
      I & 0 & 0 & 0 \\
      0 & I & 0 & 0 \\
      0 & 0 & P_{(\alpt)^c}^T & 0 \\
      0 & 0 & 0 & P_{(\alpw)^c}^T
    \end{bmatrix}
  \end{align*}
  All partial derivatives of $\Lagr$ are evaluated at $(y^*, \lambda^*)$.
\ifcase1
  Moreover, $\Jeabs$ has the form
  \begin{align*}
    \Jeabs(x) =
    \Jeabs(t, w) =
    \begin{bmatrix}
      \pd_t \cE(t,\abs{\zt(t)}) & 0 \\
      \pd_t \cI(t,\abs{\zt(t)}) & -\Sigma^w \\
      [e_i^T\pd_t \zt(t)]_{i\in \alpt} & 0 \\
      0 & [e_i^T I]_{i\in \alpw}
    \end{bmatrix}
    ,
  \end{align*}
\else
  Moreover, $\Jeabs(x) = \Jeabs(t, w)$ has the form
  derived in \cref{lem:e-likq},
\fi
  and thus its nullspace is spanned by
  \begin{align*}
    \_U(x) = \begin{bmatrix} U(t) \\ \Sigma^w \pd_t \cI \end{bmatrix},
  \end{align*}
  where $U(t)$ spans the nullspace of
\ifcase1
  \begin{align*}
    \Jabs(t) =
    \begin{bmatrix}
      \pd_t \cE(t, \abs{\zt(t)}) \\
      \pd_t \cA(t, \abs{\zt(t)}) \\
      [e_i^T\pd_t \zt(t)]_{i \in \alpt}
    \end{bmatrix}
    .
  \end{align*}
\else
  $\Jabs(t)$ from \cref{def:likq}.
\fi
  Using this and $\pd_t \zw(w) = I$, the matrix $\Ueabs$ reads
  \begin{align*}
    \Ueabs(x) =
    \begin{bmatrix}
      U(t) \\ \Sigma^w \pd_t \cI \\
      [e_i^T \Sigma^t \pd_t \zt(t) U(t)]_{i \notin \alpt} \\
      [e_i^T \pd_t \cI]_{i \notin \alpw}
    \end{bmatrix}
    .
  \end{align*}
  Finally, we have
  \begin{align*}
    0 \le
    \Ueabs(x^*)^T \Heabs(\_y^*, \_\lambda^*) \Ueabs(x^*) =
    \Uabs(t^*)^T \Habs(y^*, \lambda^*) \Uabs(t^*)
  \end{align*}
  with $\Uabs(t)$ from \eqref{eq:Uabs} and
  \begin{equation*}
    \Habs(y, \lambda) =
    \begin{bmatrix}
      I & 0 \\
      0 & P_{(\alpt)^c}
    \end{bmatrix}
    \begin{bmatrix}
      \pd_{11} \Lagr(y, \lambda) & \pd_{12} \Lagr(y, \lambda) \\
      \pd_{21} \Lagr(y, \lambda) & \pd_{22} \Lagr(y, \lambda)
    \end{bmatrix}
    \begin{bmatrix}
      I & 0 \\
      0 & P_{(\alpt)^c}^T
    \end{bmatrix}
    .
  \end{equation*}
  This proves the claim.
\end{proof}

\begin{theorem}[Second Order Sufficient Conditions for \eqref{eq:i-anf}]
  Assume that $y^* = (t^*,(\zt)^*)$ is kink stationary for \eqref{eq:i-anf}
  with a Lagrange multiplier vector $\lambda^*$ that satisfies
  strict complementarity for $\lamI^*$ and strict normal growth,
  \begin{equation*}
    [\lamE^T \pd_2 \cE -
    \lamI^T \pd_2 \cI +
    \lamZ^T \pd_2 \cZ]_i > \abs{(\lamZ)_i}, \quad i \in \alpt(t^*).
  \end{equation*}
  Assume further that LIKQ holds at $t^*$, and that
  \begin{equation*}
    \Uabs(t^*)^T \Habs(y^*, \lambda^*) \Uabs(t^*) > 0.
  \end{equation*}
  Then, $(t^*, (\zt)^*)$ is a strict local minimizer of \eqref{eq:i-anf}.
\end{theorem}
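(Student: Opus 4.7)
The strategy mirrors the proof of \cref{th:2nd-nec-ianf}: transfer the assumption to the slack-reformulated problem \eqref{eq:e-anf} (equivalently, its compact form \eqref{eq:be-nlp}), invoke the second order sufficient conditions for \eqref{eq:be-nlp} from \cite{Hegerhorst_Steinbach:2019}, and pull the resulting strict local minimality back to \eqref{eq:i-anf}. By \cref{th:likq}, LIKQ for \eqref{eq:i-anf} at $t^*$ is equivalent to LIKQ for \eqref{eq:e-anf} at every lifted point $(t^*, w^*, (\zt)^*, (\zw)^*)$ with $w^* \in W(t^*)$, so the lifted problem is amenable to the sufficient-conditions machinery developed there.

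First I would verify that the hypotheses lift. Setting $\blamE = (\lamE^*, -\lamI^*)$ and $\blamZ = (\lamZ^*, 0)$ as in \cref{th:local-min-kink-stat}, the kink stationarity conditions for \eqref{eq:i-anf} imply kink stationarity of the lifted point for \eqref{eq:e-anf}. The strict complementarity assumption on $\lamI^*$ together with strict normal growth on $\alpt(t^*)$ should translate into the strict complementarity / strict normal growth assumption of \cite[SSC theorem]{Hegerhorst_Steinbach:2019} applied to \eqref{eq:be-nlp}, using that $\alpw(w^*) = \setA(t^*)$ and that on $\alpw$ one inherits $\lamZw = 0$ while the inequality multiplier $\lamI^* > 0$ plays the role of the normal-growth bound there.

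Next I would translate the Hessian positivity. In the proof of \cref{th:2nd-nec-ianf} the chain of equalities
\[
\Ueabs(x^*)^T \Heabs(\_y^*, \_\lambda^*) \Ueabs(x^*)
= \Uabs(t^*)^T \Habs(y^*, \lambda^*) \Uabs(t^*)
\]
was established (for any choice of signs in $\Sigma^w$, via the explicit forms of $\Ueabs$, $\Heabs$ and the identity $\pd_t \zw(w) = I$). That identity is purely algebraic and valid in both directions, so the sufficient-conditions assumption $\Uabs^T \Habs \Uabs > 0$ is equivalent to $\Ueabs^T \Heabs \Ueabs > 0$. Invoking the second order sufficient conditions for \eqref{eq:be-nlp} from \cite{Hegerhorst_Steinbach:2019} then yields that $(t^*, w^*, (\zt)^*, (\zw)^*)$ is a strict local minimizer of \eqref{eq:e-anf}.

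Finally, strict local minimality must be transported back to \eqref{eq:i-anf}. Suppose, for contradiction, that there is a sequence $(t^k, \zt(t^k)) \in \Fabs$ with $(t^k, \zt(t^k)) \to (t^*, (\zt)^*)$ and $f(t^k) \le f(t^*)$ with $(t^k, \zt(t^k)) \ne (t^*, (\zt)^*)$. For $k$ large the active inequality set satisfies $\setA(t^k) \subseteq \setA(t^*)$, so one can choose slacks $w^k$ with $\abs{w^k} = \cI(t^k, \abs{\zt(t^k)})$ and $\sign(w^k_i) = \sign(w^*_i)$ for $i \notin \setA(t^*)$. The lifted sequence $(t^k, w^k, \zt(t^k), \zw(w^k))$ then lies in $\Feabs$, converges to the lifted point, and contradicts its strict local minimality in \eqref{eq:e-anf}. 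The main obstacle is precisely this back-transfer of strict minimality: one must make sure that the lifted sequence stays in the same connected component of $\Feabs$ so that strict local minimality of the lifted point is actually contradicted; the index-set stabilization argument above handles this.
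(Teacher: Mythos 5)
Your proposal is correct and follows essentially the same route as the paper: pass to the slack reformulation \eqref{eq:e-anf}, check that strict complementarity plus strict normal growth and the reduced-Hessian positivity lift (the latter via the identity from the necessary-conditions proof), and apply the second order sufficient conditions of \cite{Hegerhorst_Steinbach:2019}. Your explicit back-transfer of strict local minimality from \eqref{eq:e-anf} to \eqref{eq:i-anf} via sign-consistent slack sequences is a detail the paper leaves implicit, and it is a correct and worthwhile addition.
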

\begin{proof}
  As before we consider the slack reformulation
  \eqref{eq:e-anf} of \eqref{eq:i-anf}.
  The assumption of strict complementarity for $\lamI^*$
  and strict normal growth for \eqref{eq:i-anf}
  implies strict normal growth for \eqref{eq:e-anf}.
  Moreover, the previous proof shows that the condition
  \begin{equation*}
    \Uabs(t^*)^T \Habs(y^*, \lambda^*) \Uabs(t^*) > 0
  \end{equation*}
  is equivalent to
  \begin{align*}
    \Ueabs(x^*)^T \Heabs(\_y^*, \_\lambda^*) \Ueabs(x^*) > 0,
  \end{align*}
  which can be reformulated using the variable splitting
  of \cite{Hegerhorst_Steinbach:2019}.
  Then, \cite[Theorem 5.19]{Hegerhorst_Steinbach:2019} can be applied,
  which gives the assertion.
\end{proof}

\begin{theorem}
  Assume that $(t^*, (\zt)^*)$ is kink stationary for \eqref{eq:i-anf}
  with Lagrange multiplier vector $\lambda^*$
  such that strict complementarity and strict normal growth are satisfied.
  Assume further that LIKQ holds at $t^*$.
  Then,
  \begin{align*}
    \Umpcc(y^*)^T\! \Hmpcc(y^*, \lambda^*) \Umpcc(y^*) \ge 0
    \iff
    \Uabs(t^*)^T\! \Habs(t^*, (\zt)^*, \lambda^*) \Uabs(x^*) \ge 0,
  \end{align*}
  where $y^* = (t^*, (\ut)^*, (\vt)^*) = (t^*, [(\zt)^*]^+, [(\zt)^*]^-)$.
  The equivalence holds also with strict inequalities.
\end{theorem}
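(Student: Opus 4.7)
The plan is to prove the stronger statement that the two reduced Hessian quadratic forms are in fact identical, from which both the ``$\ge 0$'' and ``$> 0$'' versions follow at once. First, the hypotheses on the two sides need to be aligned via the preceding results: by \cref{th:likq-licq-i}, LIKQ for \eqref{eq:i-anf} at $t^*$ is equivalent to MPCC-LICQ for \eqref{eq:i-mpcc} at $y^*$; by \cref{thm:s-stat-is-kink-stat} kink stationarity of $(t^*,(\zt)^*)$ with multiplier $\lambda^*$ corresponds to S-stationarity of $y^*$ with a multiplier pair $(\lambda^*,\mu^*)$; and the proof of \cref{thm:s-stat-is-kink-stat} gives the explicit formulas $(\muu^*)_i = [\lamE^T\pd_2\cE - \lamI^T\pd_2\cI + \lamZ^T\pd_2\cZ]_i - (\lamZ^*)_i$ and $(\muv^*)_i = [\,\cdots\,]_i + (\lamZ^*)_i$, so that strict normal growth together with strict complementarity of $\lamI^*$ translates precisely into MPCC-strict complementarity of $(\lambda^*,\mu^*)$. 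Hence $\Umpcc(y^*)$ from \eqref{eq:Umpcc} and $\Uabs(t^*)$ from \eqref{eq:Uabs} are the proper objects to compare.

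Next I would observe that the nullspace spanners $\tilde U(y^*)$ and $U(t^*)$ may be taken to be the same matrix: substituting $\pd_t\zt(t^*) = [I - \pd_2\cZ\,\Sigma^t]^{-1}\pd_1\cZ$ into $\Jabs(t^*)$ from \cref{def:likq} shows that $\Jabs(t^*)$ and the matrix in \eqref{eq:U1} coincide, using that the homeomorphism of \cref{lem:hom-F-i} identifies $\setDt$ with $\alpt$. Both Hessians moreover derive from a single underlying second derivative: since $\Lc$ depends linearly on $(\muu,\muv)$ and the $-(\ut-\vt)$ contribution inside the $\lamZ$-multiplier is also linear, $\Hmpcc$ reduces to the Hessian of
\begin{equation*}
  L_0(t,s) \define f(t) + \lamE^T\cE(t,s) - \lamI^T\cI(t,s) + \lamZ^T\cZ(t,s)
\end{equation*}
evaluated at $s = \ut^* + \vt^* = \abs{(\zt)^*}$. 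Analogously, the extra affine-in-$\abs\zt$ term $-\lamZ^T P_{(\alpt)^c}^T P_{(\alpt)^c}\,\Sigma^t\abs\zt$ in $\Lagr$ drops out upon twofold differentiation, so $\Habs$ reduces to the Hessian of the same $L_0(t,\abs\zt)$ with its $\abs\zt$-block projected by $P_{(\alpt)^c}$.

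The decisive step is then a direct block calculation resting on the diagonal identity $\PUt+^T \PUt+ - \PVt+^T \PVt+ = \Sigma^t$ (both sides having $+1$ on $\setUt_+$, $-1$ on $\setVt_+$, and $0$ on $\setDt = \alpt$), combined with $P_{(\alpt)^c}^T P_{(\alpt)^c}\,\Sigma^t = \Sigma^t$ since $\Sigma^t$ vanishes on $\alpt$. Writing $L_{11},L_{12},L_{22}$ for the blocks of $\pd^2 L_0$, these two identities collapse both quadratic forms onto the same intermediate matrix
\begin{equation*}
  \tilde{H} \define L_{11} + L_{12}\,\Sigma^t\,\pd_t\zt + (\Sigma^t\,\pd_t\zt)^T L_{12}^T + (\Sigma^t\,\pd_t\zt)^T L_{22}\,\Sigma^t\,\pd_t\zt,
\end{equation*}
yielding $\Umpcc(y^*)^T\Hmpcc(y^*,\lambda^*)\Umpcc(y^*) = \tilde{U}^T\tilde{H}\tilde{U} = U^T\tilde{H}U = \Uabs(t^*)^T\Habs(y^*,\lambda^*)\Uabs(t^*)$, and hence the claimed equivalence for both the ``$\ge$'' and ``$>$'' versions. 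The main technical obstacle lies in the careful bookkeeping of the projectors $\PUt+,\PVt+,P_{(\alpt)^c}$ and the signature matrix $\Sigma^t$: one must verify that the sign decomposition $(\ut,\vt) = ([\zt]^+,[\zt]^-)$ is mediated precisely by $\Sigma^t$ through the above identity, so that the $3s_t$-dimensional MPCC quadratic form collapses onto the $s_t$-dimensional abs-normal one without loss of information.
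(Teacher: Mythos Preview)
Your proposal is correct and follows essentially the same route as the paper: both proofs show that the two reduced Hessian quadratic forms are \emph{equal} by identifying $\tilde U(y^*)=U(t^*)$, reducing both Hessians to the second derivative of the common function $L_0(t,s)=f(t)+\lamE^T\cE-\lamI^T\cI+\lamZ^T\cZ$, and then collapsing the $(\setUt_+,\setVt_+)$ block structure onto $(\alpt)^c$. The paper does the last step by writing out the $3\times3$ block quadratic form and invoking $\setUt_+\cup\setVt_+=(\alpt)^c$, whereas you make the mechanism explicit via the diagonal identity $\PUt+^T\PUt+-\PVt+^T\PVt+=\Sigma^t$ together with $P_{(\alpt)^c}^TP_{(\alpt)^c}\Sigma^t=\Sigma^t$; this is arguably clearer but amounts to the same computation. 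Your preliminary alignment of hypotheses (LIKQ $\leftrightarrow$ MPCC-LICQ, kink stationarity $\leftrightarrow$ S-stationarity, strict normal growth plus strict complementarity $\leftrightarrow$ MPCC-strict complementarity) is correct and useful context, though the paper's proof is purely algebraic and does not actually invoke these assumptions inside the argument---they serve only to ensure that $\Umpcc$ and $\Uabs$ are the relevant objects.
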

\begin{proof}
  Using that $u^*+v^*=\abs{(\zt)^*}$ and $(\zt)^*=\zt(t^*)$ the matrix $\Umpcc(y^*)$ in \eqref{eq:Umpcc} reads
  \begin{align*}
    \Umpcc(y^*) =
    \begin{bmatrix}
      I \\ +\PUt+ \pd_t \zt(t^*) \\ 0 \\ -\PVt+ \pd_t \zt(t^*) \\ 0
    \end{bmatrix}
    \tilde{U}(y^*)
  \end{align*}
  with $\tilde{U}(y^*)$ defined in \eqref{eq:U1}.
  The Lagrangians of \eqref{eq:i-mpcc} and \eqref{eq:i-anf}, respectively, are
  \begin{align*}
    \Lc(y, \lambda) &= f(t) +
    \lamE^T \cE(t, \ut + \vt) -
    \lamI^T \cI(t, \ut + \vt) +
    \lamZ^T [\cZ(t, \ut + \vt) - (\ut - \vt)], \\
    \Lagr(t, \abs{\zt}, \lambda) &= f(t) +
    \lamE^T \cE(t, \abs{\zt}) -
    \lamI^T \cI(t, \abs{\zt}) +
    \lamZ^T [\cZ(t, \abs{\zt}) - \setP_{\alpha^c}^T\setP_{\alpha^c} \Sigma^t \abs{\zt}].
  \end{align*}
  Thus $\Umpcc(y^*)^T\Hmpcc(y^*, \lambda^*)\Umpcc(y^*) = \Umpcc(y^*)^T\nabla_{yy} \Lc(y^*, \lambda^*)\Umpcc(y^*)$ can be written
  with $(\zt)^* = (\ut)^* - (\vt)^*$ and using that $\tilde{U}(y^*)=U(t^*)$, as
  \begin{equation*}
    \begin{bmatrix}
    U(y^*) \\ +\PUt+ \pd_t\zt(t^*)U(y^*) \\ -\PVt+ \pd_t\zt(t^*)U(y^*)
    \end{bmatrix}^T
    \begin{bmatrix}
      \hfill H_{11} & \hfill H_{21} \PUt+^T & \hfill H_{21} \PVt+^T \\
      \PUt+ H_{12} & \PUt+ H_{22} \PUt+^T & \PUt+ H_{22} \PVt+^T \\
      \PVt+ H_{12} & \PVt+ H_{22} \PUt+^T & \PVt+ H_{22} \PVt+^T
    \end{bmatrix}
    \begin{bmatrix}
    U(y^*) \\ +\PUt+ \pd_t\zt(t^*)U(y^*) \\ -\PVt+ \pd_t\zt(t^*)U(y^*)
    \end{bmatrix}
  \end{equation*}
  where $H_{ij} \define \pd_i \pd_j \Lagr(t^*, (\zt)^*, \lambda^*)$.
  Now, since $\setUt_+ \cup \setVt_+ = (\alpt)^c$,
  the left-hand inequality of the claim reads
  \begin{align*}
    \def\arraystretch{1.3}
    \begin{bmatrix}
      U(y^*) \\  P_{(\alpt)^c} \Sigma \pd_t \zt(t^*) U(y^*)
    \end{bmatrix}^T
    \begin{bmatrix}
      \hfill H_{11} & \hfill H_{21} P_{(\alpt)^c}^T \\
      P_{(\alpt)^c} H_{12} & P_{(\alpt)^c} H_{22} P_{(\alpt)^c}^T
    \end{bmatrix}
    \begin{bmatrix}
      U(y^*) \\ P_{(\alpt)^c} \Sigma \pd_t \zt(t^*) U(y^*)
    \end{bmatrix}
    \ge 0.
  \end{align*}
  This is $\Uabs(t^*)^T \Habs(t^*, (\zt)^*, \lambda^*) \Uabs(t^*) \ge 0$.
\end{proof}

Note that the previous theorem can be used to transfer
the second order conditions for \eqref{eq:i-anf}
and \eqref{eq:i-mpcc} into each other.
This follows from the equivalence of LIKQ and MPCC-LICQ
by \cref{th:likq-licq-i} and from the equivalence
of stationarity concepts by \cref{thm:s-stat-is-kink-stat}.

\section{Conclusions and Outlook}\label{sec:conclusion}

We have shown that general abs-normal NLPs
are essentially the same problem class as MPCCs.
The two problem classes have corresponding constraint qualifications,
stationarity concepts, and optimality conditions of first and second order.
We have also shown that the slack reformulation
from \cite{Hegerhorst_Steinbach:2019},
which is useful to simplify derivations under LIKQ,
does not preserve IDKQ and has other subtle drawbacks
like non-uniqueness of slack variables.
We have not considered counterpart abs-normal NLPs of general MPCCs
as in \cite{Hegerhorst_et_al:2019:MPEC1}.
This would provide a different perspective on the equivalence
of the two problem classes but no additional insight.
It is hoped that the identities revealed may serve to transfer algorithms for the solution of MPCCs to the young field of abs-normal forms and abs-normal NLP. Vice versa, active signature algorithms for abs-normal forms, such as
SALMIN \cite{Griewank_Walther_2018}, may be applicable to MPCCs.
Relations between the two problem classes under weaker
constraint qualifications of Abadie type and Guignard type
are the subject of part two of this research and are put forward in a companion article.

\bibliographystyle{jnsao}
\bibliography{abs_normal_nlp}

\end{document}